\def\C{\mathbf{C}}
\def\E{\mathbf{E}}
\def\R{\mathbf{R}}
\def\1{\mathbf{1}}
\def\tr{\rm{tr}}
\def\sgn{\rm{sgn}}
\def\al{\alpha}
\def\pa{\partial}
\def\de{\delta}
\newtheorem{prop}{Proposition}[section]
\newtheorem{theorem}{Theorem}[section]
\newtheorem{remark}{Remark}
\newcommand{\la}{\lambda}
\newcommand{\si}{\sigma}
\newcommand{\om}{\omega}
\newcommand{\Ga}{\Gamma}
\newcommand{\De}{\Delta}
\begin{document}
\title{Dynamic quantum games}
%\thanks{}

\author{Vassili N. Kolokoltsov\thanks{Department of Statistics, University of Warwick,
 Coventry CV4 7AL UK, associate member of HSE, Moscow,
  Email: v.kolokoltsov@warwick.ac.uk}}
\maketitle
	
\begin{abstract}
Quantum games represent the really 21st century branch of game theory, tightly linked to the modern
development of quantum computing and quantum technologies. The main accent in these developments so far was made on
stationary or repeated games. In this paper we aim at initiating the truly dynamic theory with strategies chosen
 by players in real time. Since direct continuous observations are known to destroy quantum evolutions
 (so-called quantum Zeno paradox) the necessary new ingredient for quantum dynamic games must be the theory of
 non-direct observations and the corresponding quantum filtering. Apart from the technical problems in
 organising feedback quantum control in real time, the difficulty in applying this theory for obtaining
 mathematically amenable control systems is due partially to the fact that it leads usually to rather nontrivial
 jump-type Markov processes and/or degenerate diffusions on manifolds, for which the corresponding control
 is very difficult to handle. The starting point for the present research
  is the remarkable discovery (quite unexpected, at least to the author) that there exists a very natural class of homodyne
  detections such that the diffusion processes on projective spaces resulting by filtering under such arrangements
  coincide exactly with the standard Brownian motions (BM) on these spaces. In some cases one can even reduce the
  process to the plain BM on Euclidean spaces or tori. The theory of such motions is well studied
  making it possible to develop a tractable theory of related control and games, which can be
  at the same time practically implemented on quantum optical devices.
\end{abstract}

{\bf Key words:} quantum dynamic games, quantum control, quantum filtering, Belavkin equation, stochastic Schr\"odinger equation,
 output process and innovation process, Brownian motion
on sphere and complex projective spaces, controlled diffusion on Riemannian manifolds,
Hamilton-Jacobi-Bellman-Isaacs equation on manifolds, classical and mild solutions, Ito's formula.

\section{Introduction}

Quantum games represent the really 21st century branch of game theory, tightly linked to the modern
development of quantum computing and quantum technologies. Initiated by Meyer \cite{MeyerD99},
Eisert,  Wilkens and Lewenstein \cite{EWL99}, and Marinatto and Weber \cite{MW00}, the theory now
boasts of many beautiful results obtained by various authors in numerous publications, see e.g.
surveys \cite{KhanReview18}, \cite{GuoZhang08}, and a mathematically oriented survey \cite{KolQSurv}.
 However, the main accent in these developments was made on
stationary or repeated games. In this paper we aim at initiating the truly dynamic theory with strategies chosen
 by players in real time. Since direct continuous observations are known to destroy quantum evolutions
 (so-called quantum Zeno paradox) the necessary new ingredient for quantum dynamic games must be the theory of
 non-direct observations and the corresponding quantum filtering. This theory was essentially
 developed by Belavkin in the 80s of the last century, in \cite{Bel87}, \cite{Bel88}, \cite{Bel92},
 see \cite{BoutHanJamQuantFilt} for a readable modern account. There is an important work under way on
 the technical side of organising feedback quantum control in real time, see e.g. \cite{Armen02Adaptive},
 \cite{Bushev06Adaptive} and \cite{WiMilburnBook}. The difficulty in applying this theory for obtaining
 mathematically amenable control systems is due partially to the fact that it leads usually to rather nontrivial
 jump-type Markov processes and/or degenerate diffusions on manifolds, for which the corresponding control
 (an even more so games) is very difficult to handle.

 The starting point for the present research
  was the remarkable discovery (quite unexpected, at least to the author) that there exists a very natural class of homodyne
  detections such that the diffusion processes on spheres or projective spaces resulting by filtering under such arrangements
  coincide exactly with the standard Brownian motions (BM) on these Riemannian manifolds, that is, the processes
   generated by the invariant Laplace -Beltrami operator. For qubits the basic example of such
  special arrangements is the choice of the three Pauli matrices as the coupling operators governing the interaction
  with the optical measuring devices. For qudits the corresponding matrices can be chosen as the generalized Pauli
 or Gell-Mann matrices. Another unexpected feature of these special arrangements is that the corresponding diffusions
 written with respect to the output process coincide exactly with the diffusions written with respect to the so-called
 innovation process that plays the key role in the theory of quantum feedback control.
 The theory of the BM on compact Riemannian manifolds is well studied in stochastic analysis and operator theory on manifolds,
 making it possible to develop a tractable theory of related control and games, which can be
  at the same time practically implemented on quantum optical devices. This theory is based on the ability to build classical or mild solutions to the corresponding Hamilton-Jacobi-Bellman-Isaacs (HJB-Isaacs) equations on compact Riemannian manifolds,
  which makes it more elementary than the approach to stochastic control based on the viscosity solutions,
  for which we refer to \cite{Zhu14} and references therein.
  Moreover,  in some cases (essentially when all controlled Hamiltonian operators commute)
  the filtered dynamics turns out to be govern by the standard Brownian motion
  on Euclidean spaces and tori, that is by the diffusion processes generated by the standard plain Laplacian
  in $\R^d$ or a torus.

  The content of the paper is as follows.
  In the next section we briefly explain the necessary tools from the theory of continuous quantum measurement and filtering.
  In the following two sections we introduce our main homodyne detection schemes (first for qubits and then for qudits)
   that allow one to turn the problems of dynamic quantum filtering, control and games into the problems of the drift control
   of the standard Brownian motions on the complex projective spaces. In Sections \ref{secgameon Riemann} and  \ref{secqugameth}
we build the theory of classical and mild solutions of the HJB-Isaacs equations on Riemannian manifolds leading to the
theory of dynamic control and games on compact Riemannian manifolds and thus automatically to the theory of
 quantum dynamic control and games under the special homodyne detection schemes.
In Section \ref{secqugametotori} we introduce yet another homodyne detection scheme that leads to the simpler drift controls on tori,
which works however only in case when all controlled Hamiltonian operators commute. In Section \ref{secqugametotoriex} an exactly
solvable model is presented, which can be considered as a kind of dynamic extension of the initial quantum coin flipping game of Meyer.
In Section \ref{secqugametoEuc} yet another detection scheme is developed that turns the problem of quantum dynamic control to
 the drift control of the standard BM in Euclidean spaces.
In Sections \ref{sectwoatoms} and \ref{sectwoatomsnonzerosu} a version of the theory is developed for players acting on different atoms,
thus for the dynamic games set in the spirit of papers \cite{EWL99} and \cite{MW00}.
Some conclusions and perspectives are drawn and open questions posed in Section \ref{conc}.

\section{Prerequisites: nondemolition observation and quantum filtering}

The general theory of quantum non-demolition observation, filtering and resulting
feedback control was built essentially in papers  \cite{Bel87}, \cite{Bel88}, \cite{Bel92}.
A very good readable introduction is given in \cite{BoutHanJamQuantFilt}.
 We shall describe briefly the main result of this theory.

The non-demolition measurement of quantum systems can be organised in two versions:
photon counting and homodyne detection. One of the first mathematical results on the control
with photon counting measurement was given in \cite{Kol92}, which can be used to develop the
corresponding  game theoretical version. But here we fully concentrate on the homodyne
(mathematically speaking, diffusive type) detection. Under this type of measurement the
output process $Y_t$ is a usual Brownian motion (under appropriate probability distribution).
There are several (by now standard) ways of writing down the quantum filtering equation for
states resulting from the outcome of such process. The one which is the most convenient to our
purposes is the following linear Belavkin filtering equation (which is a particular version
of the stochastic Schr\"odinger equation)
describing the a posteriori (pure but not normalized) state:

\begin{equation}
\label{eqqufiBlin}
d\chi =-[iH\chi +\frac12 L^*L \chi ]\,dt+L\chi dY_t,
\end{equation}
where the unknown vector $\chi$ is from the Hilbert space of the observed quantum system,
which we shall loosely referred to everywhere as the atom,
the self-adjoint operator $H$ is the Hamiltonian
of the corresponding initial (non-observed) quantum evolution and the operator $L$
is the coupling operator of the atom to the optical measurement device specifying the chosen version
of the homodyne detection. Very often the operator $L$ is
chosen to be self-adjoint, in which case equation \eqref{eqqufiBlin} reduces to the simper form
\begin{equation}
\label{eqqufiBlins}
d\chi =-[iH\chi +\frac12 L^2 \chi ]\,dt+L\chi dY_t.
\end{equation}

An important part in the theory is played by the so-called innovation process
\begin{equation}
\label{eqdefinnov}
dB_t=dY_t-\langle L+L^* \rangle_{\chi} \, dt,
\end{equation}
where for an operator $A$ and a vector $v$ in a Hilbert space we use the (more or less standard)
notation for the average value of $A$ in $v$:
\[
\langle A \rangle_v=\frac{(v,Av)}{(v,v)}.
\]

The innovation process is in some sense a more natural driving noise to deal with,
because it turns out to be the standard Brownian motion (or the Wiener process)
with respect to the fixed (initial vacuum) state of the homodyne detector, while
the output process $Y_t$ is a Brownian motion with respect to the states transformed
by the (quite complicated) interaction of the quantum system and optical device, which can also be obtained
by the Girsanov transformation from the innovation process $B_t$.
Therefore another well used version of equation \eqref{eqqufiBlin} is the nonlinear equation
on the normalized vector $\phi=\chi/|\chi|$, which can be obtained directly from \eqref{eqqufiBlin}
by the classical Ito formula (using the classical Ito rule for the differentials of the Wiener processes,
$dY_t dY_t=dt$), but written in terms of the innovation process $B_t$.

The theory extends naturally to the case of several, say $N$, coupling operators $\{L_j\}$,
where the quantum filtering is described by the following direct extension
of equation \eqref{eqqufiBlin}:
 \begin{equation}
\label{eqqufiBlinn}
d\chi =-[iH\chi +\frac12 \sum_j L_j^*L_j \chi ]\,dt+\sum_j L_j\chi dY^j_t,
\end{equation}
with the $N$-dimensional output process $Y_t=\{Y^j_t\}$.
The corresponding innovation process is the standard $N$-dimensional
Wiener process with the coordinate differentials
\[
dW^j_t=dY^j_t-\langle L_j+L_j^* \rangle_{\chi} \, dt.
\]

The theory of quantum filtering reduces the analysis of quantum dynamic control and games
to the controlled version of evolutions \eqref{eqqufiBlinn}. The simplest situation concerns the case
 when the homodyne device is fixed, that is the operators $L_j$ are fixed, and the players can control the
 Hamiltonian $H$, say, by applying appropriate electric or magnetic fields to the atom. Thus equation
 \eqref{eqqufiBlinn} becomes modified by allowing $H$ to depend on one or several control parameters.
 One can even prove a rigorous mathematical result, the so-called separation principle
 (see \cite{BoutHanQuantumSepar}), that shows that the effective control of an observed quantum system
 (that can be based in principle on the whole history of the interaction of the atom and optical devices)
 can be reduced to the Markovian feedback control of the quantum filtering equation, with the feedback
 at each moment depending only on the current (filtered) state of the atom.

\begin{remark} The filtering equation \eqref{eqqufiBlin} was initially derived from the interaction
of the atom and optic devices described by
the unitary evolution solving the quantum stochastic equation
\[
dU_t=(L \, dA_t^*-L^* \, dA_t -\frac12 L^*L \, dt -i H \, dt) U_t,
\]
where $A_t, A_t^*$ are the Hudson-Partasarathy differentials of the quantum stochastic Wiener noise
(built from the annihilation and creation operators).
It can be shown (see e.g. \cite{AFL}) that this evolution represents the Markovian approximation
to the more realistic quantum dynamics
\[
\dot U_t =[-iH +L a^*(t,0)-L^* a(t,0)] U_t,
\]
driven by a stationary Gaussian wide-band noise of the annihilation operators $a(t,r)$.
 More elementary derivations of the main filtering equation (bypassing heavy use
 of quantum stochastic calculus) are also available. It can be obtained
from an appropriate limit of sequential discrete observation scheme, see e.g. \cite{BelKol} or
\cite{Pellegrini}. A derivation from the theory of instruments was given in \cite{BarchBel}
 and \cite{Holevo91}.
\end{remark}

\section{Special homodyne detection leading to the Laplace-Beltrami operator on a sphere (for qubits)}
\label{qugametosphere}

For a qubit the Hilbert space of an atom is $\C^2$. Since the pure state of a quantum
 system is specified by a vector in the Hilbert space up to a multiplier, the actual state space
 is the one-dimensional complex projective space or a two-dimensional sphere, often referred to as the Bloch sphere.
Hence as the natural coordinate outside the state vector $(0,1)$ one can take the complex number
 $w=\chi_1/\chi_0$. It is straightforward to rewrite evolution \eqref{eqqufiBlinn} in $\C^2$ in terms of $w$.
 Namely, from the equation for the first coordinate $\chi_0$ (and Ito's rule for the function $1/x$)
 we find the equation for $\chi_0^{-1}$:
 \[
 d\chi_0^{-1}
 =\frac{1}{\chi_0^2}[iH\chi +\frac12 \sum_j L_j^*L_j \chi ]_0\,dt-\frac{1}{\chi_0^2}\sum_j (L_j\chi)_0 \, dY^j_t
 +\frac{1}{\chi_0^3} \sum_j (L_j\chi)_0^2 \,dt
 \]
 and then using the Ito product rule for the product $\chi_1\chi_0^{-1}$ we find that
  \[
dw=\frac{w}{\chi_0}[i(H\chi)_0 +\frac12 (\sum_j L_j^*L_j\chi)_0 ]\,dt
-\frac{w}{\chi_0}\sum_j (L_j \chi)_0 \, dY^j_t
+\frac{w}{\chi_0^2}\sum_j(L_j \chi)_0^2 \, dt
\]
\[
-\frac{1}{\chi_0}[i(H\chi)_1+\frac12 (\sum_j L_j^*L_j\chi)_1 ]\,dt
+\frac{1}{\chi_0}\sum_j(L_j \chi)_1 \, dY^j_t
-\frac{1}{\chi_0^2}\sum_j (L_j \chi)_0 (L_j \chi)_1  \ dt,
\]
and finally the {\it quantum filtering equation in terms
of the projective coordinates} $w$:
 \[
dw =i[w(HW)_0-(HW)_1]\, dt+\frac12 [\sum_j w(L_j^*L_jW)_0- (L_j^*L_jW)_1 ] \, dt
\]
 \begin{equation}
\label{eqqufiBlinnind2w}
+\sum_j[w(L_jW)_0^2 -(L_jW)_0 (L_jW)_1]\, dt
+\sum_j[(L_jW)_1 -w(L_jW)_0]\, dY^j_t,
\end{equation}
where, for convenience, we have introduced the vector $W=(1,w)=\chi/\chi_0$.
Equivalently it can be rewritten in terms of the innovation processes expressed in terms of $w$ as
 \begin{equation}
\label{eqinnovationinw}
dB^j_t=dY^j_t-\langle L_j+L_j^* \rangle_W \, dt.
\end{equation}

\begin{remark} Though this is not of use for us, let us mention that coordinates $w$ can be obtained by the
stereographic projection from the Stokes parameters $(x_1,x_2,x_3)$ (or a polar vector)
describing in the most common way the Bloch sphere
of the pure quantum states of a qubit.
\end{remark}

We are interested in choosing $\{L_j\}$ in a way to make the diffusion on the Bloch sphere
defined by equation \eqref{eqqufiBlinnind2w} as simple as possible, at least to make it nondegenerate,
that is, with the second order part of the diffusion operator being elliptic.

The Hamiltonian operator does not enter the noise term, and consequently it does not play role in this question.
If $L$ consists of just one operator, the diffusion is definitely degenerate, though it may be
hypoelliptic (see \cite{Kol95}). If there are two operators $L_j$, one usually gets diffusions
that are elliptic almost everywhere (see Sections \ref{secqugametoEuc} and \ref{conc}). Turning to the case
of three operators $L_j$ it is natural to try the simplest three operators on qubits,
namely the three Pauli operators
\[
\si_1=\left(\begin{aligned}
& 0 \quad 1 \\
& 1 \quad 0
\end{aligned}
\right),
\quad
 \si_2=\left(\begin{aligned}
& 0 \quad -i \\
& i \quad \quad 0
\end{aligned}
\right),
\quad
\si_3=\left(\begin{aligned}
& 1 \quad \quad 0 \\
& 0 \quad -1
\end{aligned}
\right).
\]
Since $\si_j$ are self-adjoint and $\si_j^2=\1$, the second term in equation
in this case is seen directly to vanish. Moreover, explicit calculation of the third term shows that
it vanishes as well, so that the filtering equation \eqref{eqqufiBlinnind2w} simplifies to
\[
dw =i[w(HW)_0-(HW)_1]\, dt
+\sum_j[(\si_jW)_1 -w(\si_jW)_0]\, dY^j_t
\]
\begin{equation}
\label{eqqufiBlinnind2wP}
=i[(h_{00}+h_{01}w)w-(h_{10}+h_{11}w)]\, dt
+(1-w^2) \, dY^1_t+i(1+w^2) \, dY^2_t-2w \, dY^3_t,
\end{equation}
where $h_{jk}$ denote the entries of the  $2 \times 2$-matrix $H$.

With this equation two remarkable effects occur.

\begin{prop}
\label{proponPaulidif}
(i) Writing equation \eqref{eqqufiBlinnind2wP} in terms of the innovation process $dB^j=dY^j-2\langle \si_j \rangle_W \, dt$,
it takes exactly the same form \eqref{eqqufiBlinnind2wP} with $B^j$ instead of $Y^j$ (all new terms with the differential $dt$
cancel).

(ii) The diffusion operator $D$ corresponding to equation \eqref{eqqufiBlinnind2wP} with vanishing $H$
 takes the form
\begin{equation}
\label{eqquaqdrfromPau}
D S(x,y)= \frac12 (1+x^2+y^2)^2\left(\frac{\pa ^2S}{\pa x^2}+\frac{\pa ^2S}{\pa y^2}\right),
\end{equation}
in terms of the real coordinates $x,y$, where $w=x+iy$,
so that $D=2\De_{sp}$, where $\De_{sp}$ is the Laplace-Beltrami operator
on the 2-dimensional sphere written in stereographic coordinates.
\end{prop}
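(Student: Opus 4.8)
The plan is to prove both parts by direct computation, the only inputs being the explicit vector $W=(1,w)$, the three Pauli matrices, and the Ito rule $dY^idY^j=\de_{ij}\,dt$.

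For part (i) I would first evaluate the three averages $\langle\si_j\rangle_W=(W,\si_jW)/(W,W)$, using $(W,W)=1+|w|^2$; a one-line computation for each gives $\langle\si_1\rangle_W=(w+\bar w)/(1+|w|^2)$, $\langle\si_2\rangle_W=i(\bar w-w)/(1+|w|^2)$ and $\langle\si_3\rangle_W=(1-|w|^2)/(1+|w|^2)$. Substituting $dY^j=dB^j+2\langle\si_j\rangle_W\,dt$ into the noise part of \eqref{eqqufiBlinnind2wP}, the $dB^j$ reproduce \eqref{eqqufiBlinnind2wP} verbatim, while the extra drift collects into $\frac{2}{1+|w|^2}\bigl[(1-w^2)(w+\bar w)-(1+w^2)(\bar w-w)-2w(1-|w|^2)\bigr]\,dt$. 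The bracket is a polynomial in $w,\bar w$, and on expanding it the coefficients of $w$, $\bar w$, $w^3$ and $w^2\bar w$ each cancel identically, so the extra drift is zero. That is the whole of (i).

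For part (ii) I set $H=0$, write $w=x+iy$, and split each complex diffusion coefficient into real and imaginary parts: from $1-w^2$, $i(1+w^2)$, $-2w$ one reads off the $2\times3$ real dispersion matrix $G$ with $x$-row $(1-x^2+y^2,\,-2xy,\,-2x)$ and $y$-row $(-2xy,\,1+x^2-y^2,\,-2y)$. Since the $Y^j$ (equivalently the innovations $B^j$ of part (i), which under the reference state are genuine independent standard Brownian motions) satisfy $dY^idY^j=\de_{ij}\,dt$, the multidimensional Ito formula gives the second-order part of the generator as $\frac12\sum_{k,l}(GG^T)_{kl}\,\pa_k\pa_l$, so the crux is to compute $A:=GG^T$. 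A direct calculation shows the off-diagonal entry vanishes, since $a_1b_1+a_2b_2=-2xy\bigl[(1-x^2+y^2)+(1+x^2-y^2)\bigr]=-4xy$ cancels $a_3b_3=4xy$; and the two diagonal entries both collapse to the same $1+x^4+y^4+2x^2+2y^2+2x^2y^2=(1+x^2+y^2)^2$. This is exactly \eqref{eqquaqdrfromPau}.

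To identify $D$ with the Laplace-Beltrami operator I would recall that the round unit-sphere metric pulled back by stereographic projection is the conformally flat metric $g_{ij}=\la\,\de_{ij}$ with $\la=4/(1+x^2+y^2)^2$. For any $2$-dimensional conformally flat metric one has $\sqrt{|g|}=\la$ and $g^{ij}=\la^{-1}\de_{ij}$, whence $\De_{sp}S=\la^{-1}(\pa_x^2+\pa_y^2)S=\tfrac14(1+x^2+y^2)^2(\pa_x^2+\pa_y^2)S$; comparing with \eqref{eqquaqdrfromPau} gives $D=2\De_{sp}$. The computations are elementary, so there is no deep obstacle; the only points demanding care are the placement of the complex conjugate in the inner product (which fixes the signs in the three averages) and the passage from the complex SDE to the real generator, where the cross-contractions between distinct $Y^j$ must be dropped by independence. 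The \emph{remarkable} cancellations in (i), and the simultaneous vanishing of $A_{xy}$ together with the equalisation of $A_{xx}$ and $A_{yy}$ in (ii), are precisely the algebraic coincidences that single out the Pauli choice, but once the coefficients are written out they are verified by inspection.
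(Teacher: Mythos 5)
Your proposal is correct and follows essentially the same route as the paper, which proves the proposition by the very same direct inspection: the paper writes out the real system for $dx,dy$ and checks that the Ito second-order form collapses to $\frac12(1+x^2+y^2)^2(\pa_x^2+\pa_y^2)$, exactly as you do. You merely supply more detail than the paper does for part (i) (the explicit averages $\langle\si_j\rangle_W$ and the cancellation of the drift polynomial) and for the conformal-factor identification $D=2\De_{sp}$, and all of your computations check out.
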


\begin{proof}
This is done by direct inspection. For instance, to prove (ii), we can write the equation \eqref{eqqufiBlinnind2wP}
with vanishing $H$ in terms of the real and imaginary parts of $w$ as
\[
\begin{aligned}
& dx=(1-x^2+y^2) \, dY^1_t-2xy \, dY^2_t-2x \, dY^3_t \\
& dy=-2xy \, dY^1_t+(1+x^2-y^2) \, dY^2_t-2y \, dY^3_t.
\end{aligned}
\]
By Ito's formula, the corresponding second order operator is found to be
\[
\frac12 \frac{\pa ^2S}{\pa x^2}[(1-x^2+y^2)^2+4x^2y^2+4x^2]
+\frac12 \frac{\pa ^2S}{\pa y^2}[(1+x^2-y^2)^2+4x^2y^2+4y^2]
\]
\[
+\frac{\pa ^2S}{\pa x \pa y}[-2xy(1-x^2+y^2)^2-2xy (1+x^2-y^2)+4xy]
= \frac12 (1+x^2+y^2)^2\left(\frac{\pa ^2S}{\pa x^2}+\frac{\pa ^2S}{\pa y^2}\right),
\]
as was claimed.
\end{proof}

\begin{remark} Thus equation \eqref{eqqufiBlinnind2wP} gives a method to express the curvilinear
$2$-dimensional Brownian motion on a sphere in terms of the $3$-dimensional standard (plain) Brownian motion.
\end{remark}

It is natural to ask what is the general class of the triples of operators $L_1,L_2, L_3$, where the same effects hold.

Reducing the attention to the case of self-adjoint matrices $L_j$ let us write them as
\[
L_j=\left(\begin{aligned}
& l^{00}_j \quad l^{01}_j \\
& \bar l^{01}_j \quad l^{11}_j
\end{aligned}
\right), \quad j=1,2,3,
\]
with $l^{00}_j, l^{11}_j \in \R$, $l^{01}_j\in \C$. Let us introduce the $3$-dimensional real vectors
$L^0, L^1, L^{\de}, L_j^R, L^I$ defined by their coordinates
\[
L_j^0=l_j^{00}, \quad L_j^1=l_j^{11}, \quad L_j^{\de}=(l_j^{11}-l_j^{00})/2, \quad
 L_j^R={Re} \, l_j^{01}, \quad L_j^I={Im} \, l_j^{01}.
 \]

\begin{prop}
\label{proponspecardif2dim}

(i) The second order part of the diffusion operator arising from the stochastic equation
\eqref{eqqufiBlinnind2w} is isothermic, that is, it has the form
\[
\om(x,y)   \left(\frac{\pa ^2S}{\pa x^2}+\frac{\pa ^2S}{\pa y^2}\right)
\]
with some positive function $\om(x,y)$ if and only if the vectors
$L^{\de}, L_j^R, L^I$ form an orthonormal basis in $\R^3$, up to a common constant multiplier.
If this is the case, then this operator actually coincides with the Laplace-Beltrami operator
\eqref{eqquaqdrfromPau} (again of course up to a constant multiplier).

 (ii)  The whole diffusion operator arising from the stochastic equation
\eqref{eqqufiBlinnind2w} with vanishing $H$ is isothermic (that is, additionally to (i),
all the first order terms cancel as in the case of the Pauli matrices) if and only if
  the vectors $L^{\de}, L_j^R, L^I$ form an orthonormal basis in $\R^3$  (up to a common constant multiplier)
 and $L^0=-L^1$. Moreover, under these conditions the diffusion operator of
 stochastic equation \eqref{eqqufiBlinnind2w} coincides with the diffusion operator
 arising from   equation \eqref{eqqufiBlinnind2w} rewritten in terms of the innovation process.
\end{prop}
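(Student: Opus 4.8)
The plan is to isolate the noise coefficients and deduce both claims from a single polynomial identity. Writing $W=(1,w)$ and using self-adjointness, the coefficient of $dY^j_t$ in \eqref{eqqufiBlinnind2w} is the quadratic $g_j(w)=(L_jW)_1-w(L_jW)_0=\bar l^{01}_j+2L^\de_j w-l^{01}_j w^2$, which in the proposition's notation reads $g_j=L^R_j(1-w^2)-iL^I_j(1+w^2)+2L^\de_j w$. Splitting $g_j$ into real and imaginary parts and applying Ito's rule exactly as in the proof of Proposition \ref{proponPaulidif}(ii), the second order part of the generator is $\om_1\pa_x^2+2\om_3\pa_x\pa_y+\om_2\pa_y^2$ with $\om_1=\frac12\sum_j(\mathrm{Re}\,g_j)^2$, $\om_2=\frac12\sum_j(\mathrm{Im}\,g_j)^2$, $\om_3=\frac12\sum_j(\mathrm{Re}\,g_j)(\mathrm{Im}\,g_j)$. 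Since the real and imaginary parts of $\sum_j g_j^2$ are $2(\om_1-\om_2)$ and $4\om_3$, the operator is isothermic precisely when $\sum_j g_j(w)^2\equiv0$ as a polynomial in $w$.

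For part (i) I would expand $\sum_j g_j^2$ using $g_j=L^R_j p+L^I_j q+L^\de_j r$ with $p=1-w^2$, $q=-i(1+w^2)$, $r=2w$, obtaining $\sum_j g_j^2=|L^R|^2p^2+|L^I|^2q^2+|L^\de|^2r^2+2(L^R\!\cdot\!L^I)pq+2(L^R\!\cdot\!L^\de)pr+2(L^I\!\cdot\!L^\de)qr$. A direct check gives $p^2+q^2+r^2=0$, so orthonormality of $\{L^\de,L^R,L^I\}$ forces $\sum_j g_j^2\equiv0$; conversely, collecting the coefficients of $w^0,\dots,w^4$ in $\sum_j g_j^2=0$ yields exactly $|L^R|^2=|L^I|^2=|L^\de|^2$ together with $L^R\!\cdot\!L^I=L^R\!\cdot\!L^\de=L^I\!\cdot\!L^\de=0$, i.e. an orthonormal basis up to a common multiplier. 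To identify the resulting operator with \eqref{eqquaqdrfromPau} I would avoid recomputing $\om$ and argue invariantly: orthonormality means the matrix $O$ with columns $L^R,-L^I,-L^\de$, which is exactly the matrix of Pauli-basis coordinates of the traceless parts of the $L_j$, is orthogonal; since $g_j$ depends only on the traceless part of $L_j$ and linearly on $L_j$, the substitution $\tilde Y=O^\top Y$ (again a standard $3$-dimensional BM, as $O$ is orthogonal) turns $\sum_j g_j\,dY^j$ into the Pauli noise of \eqref{eqqufiBlinnind2wP}, whence the operator is $2\De_{sp}$.

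For part (ii) I would first compute the full drift of \eqref{eqqufiBlinnind2w} with $H=0$. Two algebraic identities do the work: $w(L_jW)_0^2-(L_jW)_0(L_jW)_1=-(L_jW)_0\,g_j$ and $w(L^2_jW)_0-(L^2_jW)_1=-(l^{00}_j+l^{11}_j)\,g_j$, both obtained by factoring out $g_j$ after using $(L^2_j)_{01}=l^{01}_j(l^{00}_j+l^{11}_j)$. Summing, the complex drift becomes $-\sum_j c_j g_j$ with $c_j=\tfrac32 l^{00}_j+\tfrac12 l^{11}_j+l^{01}_j w=2t_j-L^\de_j+l^{01}_j w$, where $t_j=(l^{00}_j+l^{11}_j)/2$. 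The part coming from $-L^\de_j+l^{01}_j w$ is precisely the drift of the already-reduced traceless (Pauli) picture and hence vanishes by Proposition \ref{proponPaulidif}(ii), leaving the drift equal to $-2\sum_j t_j g_j=-2[(t\!\cdot\!L^R-i\,t\!\cdot\!L^I)+2(t\!\cdot\!L^\de)w-(t\!\cdot\!L^R+i\,t\!\cdot\!L^I)w^2]$. Given that $\{L^\de,L^R,L^I\}$ is a basis, this vanishes iff $t\perp L^R,L^I,L^\de$, i.e. iff $t=0$, which is exactly $L^0=-L^1$; this is the advertised condition. Finally, for the coincidence with the innovation form I would note that every term of \eqref{eqqufiBlinnind2w} is a sum over $j$ of expressions at most quadratic in $L_j$, so under orthonormality together with $L^0=-L^1$ (whence $L_j=\sum_k O_{jk}\si_k$ with $O$ orthogonal) the relation $O^\top O=\1$ collapses each such sum to its Pauli value; in particular the innovation correction $\sum_j g_j\langle L_j+L_j^*\rangle_W$ collapses to its Pauli counterpart, which vanishes by Proposition \ref{proponPaulidif}(i), so the output and innovation diffusion operators coincide. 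The main obstacle is the bookkeeping in the drift computation: organizing the two identities so that the traceless contribution is recognized as the vanishing Pauli drift, leaving a term that orthonormality pins down to $t=0$; the rest is linear algebra and the reduction to Proposition \ref{proponPaulidif}.
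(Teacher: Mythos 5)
Your argument is correct, and it is worth noting that the paper offers no proof to compare against: the stated proof is only that the result follows ``by lengthy explicit calculations, which we omit.'' Your proposal supplies exactly the omitted computation, but organizes it so that very little brute force remains. The key simplifications all check out: the noise coefficient is indeed $g_j=L^R_jp+L^I_jq+L^\de_jr$ with $p=1-w^2$, $q=-i(1+w^2)$, $r=2w$; isothermality of the second-order part is equivalent to $\sum_jg_j^2\equiv 0$ because $\mathrm{Re}\sum_jg_j^2=2(\om_1-\om_2)$ and $\mathrm{Im}\sum_jg_j^2=4\om_3$; the identity $p^2+q^2+r^2=0$ gives the ``if'' direction, and matching the coefficients of $1,w,\dots,w^4$ gives precisely the six Gram conditions for the ``only if'' direction. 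The factorizations $w(L_jW)_0^2-(L_jW)_0(L_jW)_1=-(L_jW)_0g_j$ and $w(L_j^2W)_0-(L_j^2W)_1=-(l_j^{00}+l_j^{11})g_j$ are both correct, and the residual drift $-2\sum_jt_jg_j$ with $t_j=(l_j^{00}+l_j^{11})/2$ vanishes iff $t$ is orthogonal to the basis $L^R,L^I,L^\de$, i.e.\ iff $L^0=-L^1$. Your device of rotating the driving Brownian motion by the orthogonal matrix $O$ of Pauli coordinates, so that every sum over $j$ of a bilinear expression in $L_j$ collapses via $\sum_jO_{jk}O_{jl}=\de_{kl}$ to its value for the Pauli triple (where it vanishes by Proposition \ref{proponPaulidif}), handles the identification with $2\De_{sp}$, the traceless part of the drift, and the innovation correction $\sum_jg_j\langle L_j+L_j^*\rangle_W$ in one stroke; this is genuinely more economical than the direct calculation the author presumably had in mind. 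Two cosmetic points only: the statement requires a \emph{positive} $\om$, so in the ``only if'' direction of (i) you should remark that the common value $|L^R|^2=|L^I|^2=|L^\de|^2$ is nonzero (otherwise the vectors do not form a basis and $\om\equiv 0$; positivity follows from $\om=\frac14\sum_j|g_j|^2$); and the vanishing of the Pauli drift is established in the derivation of \eqref{eqqufiBlinnind2wP} rather than in Proposition \ref{proponPaulidif}(ii) itself, so the citation should point there.
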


\begin{proof}
This is done by lengthy explicit calculations, which we omit.
\end{proof}

Since the transpose of an orthogonal matrix (in our case the matrix with the columns built
from the vectors  $L^0, L_j^R, L^I$) is also orthogonal, Proposition \ref{proponspecardif2dim}
can be formulated in the following more transparent way.

\begin{prop}
\label{proponspecardif2dimr}
The diffusion operator arising from equation \eqref{eqqufiBlinnind2w} with the 3 self-adjoint matrices $L_j$
coincides with the Laplacian on a sphere (up to a multiplier),
if and only if three matrices $L_j$ form a basis in the space of traceless self-adjoint matrices,
which is orthogonal in the sense that
\[
{\tr} (L_j L_k)=2(l_j^{00} l_k^{00} +{Re} \, l_j^{01} \, {Re} l_k^{01} + {Im} \, l_j^{01} \, {Im} l_k^{01})
 =a \de_{jk}
\]
with a constant $a$. In the case of the Pauli matrices $a=2$. The exact Laplacian arises from $a=1$.
\end{prop}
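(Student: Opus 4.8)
The plan is to derive Proposition~\ref{proponspecardif2dimr} as a purely algebraic reformulation of Proposition~\ref{proponspecardif2dim}(ii); since the latter is taken as given, no further analysis of the diffusion operator itself is required. The whole task reduces to translating the two analytic conditions appearing there --- namely $L^0=-L^1$ together with ``$L^{\de},L_j^R,L^I$ form an orthonormal basis up to a common multiplier'' --- into the single intrinsic statement that the $L_j$ are an orthogonal basis of the traceless self-adjoint matrices with $\tr(L_jL_k)=a\de_{jk}$.

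First I would unpack $L^0=-L^1$: written coordinate-wise it is $l_j^{00}=-l_j^{11}$ for every $j$, i.e. $\tr L_j=0$, so this condition says precisely that each $L_j$ lies in the three-dimensional real space of traceless self-adjoint $2\times2$ matrices (spanned by $\si_1,\si_2,\si_3$). Under tracelessness one has $L_j^{\de}=(l_j^{11}-l_j^{00})/2=-l_j^{00}=-L_j^0$, so $L^{\de}=-L^0$ and orthonormality of $\{L^{\de},L^R,L^I\}$ is the same statement as orthonormality of $\{L^0,L^R,L^I\}$, a sign flip being irrelevant to inner products. Next I would compute the trace directly: for self-adjoint matrices $\tr(L_jL_k)=l_j^{00}l_k^{00}+l_j^{11}l_k^{11}+2\,{Re}(l_j^{01}\bar l_k^{01})$, where the cross term is $2({Re}\,l_j^{01}\,{Re}\,l_k^{01}+{Im}\,l_j^{01}\,{Im}\,l_k^{01})$; imposing $l_j^{11}=-l_j^{00}$ collapses the diagonal part to $2l_j^{00}l_k^{00}$, giving exactly the displayed identity $\tr(L_jL_k)=2(L_j^0L_k^0+L_j^RL_k^R+L_j^IL_k^I)$.

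Assembling $L^0,L^R,L^I$ as the columns of a $3\times3$ real matrix $M$, this reads $\tr(L_jL_k)=2(MM^T)_{jk}$, so the trace relation $\tr(L_jL_k)=a\de_{jk}$ is equivalent to $MM^T=(a/2)\1$. Here I invoke the remark quoted just before the statement: for a square matrix $MM^T=c\1$ is equivalent to $M^TM=c\1$, and the latter is precisely the orthonormality of the columns $L^0,L^R,L^I$ up to the common factor $\sqrt{a/2}$, hence of $L^{\de},L^R,L^I$ by the previous step. Since orthogonal nonzero vectors are automatically independent (and $a=\tr(L_jL_j)>0$ forces $L_j\neq0$), the trace relation already makes the $L_j$ a basis of the traceless self-adjoint matrices; conversely the basis-plus-trace hypothesis returns both conditions of Proposition~\ref{proponspecardif2dim}(ii). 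This chain of equivalences is the asserted ``if and only if''.

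Finally I would pin down the constants by homogeneity. The second-order coefficients in \eqref{eqqufiBlinnind2w} are quadratic in the entries of the $L_j$, as is $a$, and within the admissible family the operator is a multiple of the intrinsic (rotation-invariant) Laplace--Beltrami operator $\De_{sp}$; hence that multiple depends only on $a$ and does so linearly, with a universal constant fixed by any one example. Proposition~\ref{proponPaulidif}(ii) supplies it: the Pauli matrices satisfy $\tr(\si_j\si_k)=2\de_{jk}$, so $a=2$, and there $D=2\De_{sp}$; therefore $D=a\De_{sp}$ in general and the exact Laplacian $D=\De_{sp}$ is recovered precisely at $a=1$. There is no genuine analytic obstacle once Proposition~\ref{proponspecardif2dim} is granted; the only point demanding care is the bookkeeping --- matching the factor $2$ in the trace formula, the identification $L^{\de}=-L^0$, and the column-versus-row ($M^TM$ versus $MM^T$) transposition --- where a sign or index slip is the main risk.
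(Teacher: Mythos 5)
Your proposal is correct and follows essentially the same route as the paper, which likewise obtains Proposition \ref{proponspecardif2dimr} purely as an algebraic restatement of Proposition \ref{proponspecardif2dim}(ii), via the observation that the transpose of an orthogonal matrix is orthogonal (row versus column orthonormality of the $3\times 3$ matrix built from $L^0$, $L^R$, $L^I$). Your write-up simply supplies the bookkeeping that the paper leaves implicit in the one sentence preceding the statement: tracelessness $\Leftrightarrow L^0=-L^1$, the identification $L^{\de}=-L^0$, the trace computation, and the normalization of the constant via the Pauli example.
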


\section{Special homodyne detections leading to the Laplace-Beltrami operator on projective spaces (for qudits)}
\label{qugametoprsp}

In this section we extend the previous results to quantum systems in $\C^{n+1}$ with arbitrary $n$
(a qudit with $d=n+1$).

As in the case of qubit, let us start by writing the corresponding filtering equation \eqref{eqqufiBlinn}
in terms of the vector $W=(1,w_1,\cdots, w_n)=\chi/\chi_0$, that is, in the projective coordinates $w_1,\cdots, w_n$.
We have

  \begin{equation}
\label{eqqufiBlinn8}
d\chi_k =-[iH\chi +\frac12 \sum_j L_j^*L_j \chi ]_k\,dt+\sum_j (L_j\chi)_k dY^j_t, \quad k=0,\cdots, n.
\end{equation}
Hence by the Ito formula
\[
d\chi_0^{-1} =\frac{1}{\chi_0^2} [i(H\chi)_0+\frac12 \sum_j (L_j^*L_j\chi)_0] \,dt
-\frac{1}{\chi_0^2}\sum_j (L_j \chi)_0 \, dY^j_t
+\frac{1}{\chi_0^3}\sum_j (L_j \chi)_0^2 \, dt.
\]
Consequently, by the Ito product rule, we find for $k>0$ that
\[
dw_k=w_k [i(HW)_0+\frac12 \sum_j (L_j^*L_jW)_0] \,dt
-w_k\sum_j (L_jW)_0 \, dY^j_t
+w_k\sum_j (L_jW)_0^2 \, dt
\]
\[
-[i(HW)_k +\frac12 \sum_j (L_j^*L_jW)_k]\,dt+\sum_j (L_jW)_k dY^j_t
-\sum_j (L_jW)_0  (L_jW)_k \, dt,
\]
and thus the {\it quantum filtering equation for qudits} (with $d=n+1$) in terms of the projective coordinate $W$:
 \[
dw_k =i[w_k(HW)_0-(HW)_k]\, dt+\frac12 [\sum_j w_k(L_j^*L_jW)_0- (L_j^*L_jW)_k ] \, dt
\]
 \begin{equation}
\label{eqqufiBlinnind3w}
+\sum_j[w_k(L_jW)_0^2 -(L_jW)_0 (L_jW)_k]\, dt
+\sum_j[(L_jW)_k -w_k(L_jW)_0]\, dY^j_t.
\end{equation}

To reduce complexity, let us discuss in more detail the case of a three-dimensional Hilbert space (a qutrit)
of the vectors $\chi=(\chi_0,\chi_1, \chi_2)$ (the general case being quite similar).
Extending the case of qubit it is natural to choose $L_j$ to be the $8$ generalized Pauli or Gell-Mann matrices:
3 symmetric $\si^s_{jk}$ (with $1$ on places $jk$ and $kj$ and zero otherwise ), $0\le j<k\le 2$,
3 antisymmetric  $\si^a_{jk}$ (with $-i$ on the place $jk$ and $i$ on the place $kj$, and zero othersise),
 $0\le j<k\le 2$, and 2 diagonal matrices $\si^d_k$:
 \[
 \si^d_1=\left(
 \begin{aligned}
 & 1 \quad \quad 0 \quad 0 \\
 & 0 \quad -1 \quad 0 \\
 & 0 \quad \quad 0 \quad 0
 \end{aligned}
 \right),
 \quad  \si^d_2=\frac{1}{\sqrt 3} \left(
 \begin{aligned}
 & 1 \quad  0 \quad \quad 0 \\
 & 0 \quad 1 \quad \quad 0 \\
 & 0 \quad 0 \quad -2
 \end{aligned}
 \right).
 \]

 Thus
 \[
 \si_{01}^s \chi =\left(
 \begin{aligned}
 & \chi_1 \\
 & \chi_0  \\
 & \,\,  0
 \end{aligned}
 \right),
 \quad
  \si_{02}^s \chi =\left(
 \begin{aligned}
 & \chi_2 \\
 & \, \, 0  \\
 & \chi_0
 \end{aligned}
 \right),
 \quad
 \si_{12}^s \chi =\left(
 \begin{aligned}
 & \, \, 0 \\
 & \chi_2  \\
 & \chi_1
 \end{aligned}
 \right),
 \quad
 \si_{01}^a \chi =\left(
 \begin{aligned}
 & -i \chi_1 \\
 & \quad i \chi_0  \\
 & \,\,  \quad 0
 \end{aligned}
 \right),
 \]
 \[
  \si_{02}^a \chi =\left(
 \begin{aligned}
 & -i \chi_2 \\
 & \, \, \quad  0  \\
 & \quad i \chi_0
 \end{aligned}
 \right),
 \quad
 \si_{12}^a \chi =\left(
 \begin{aligned}
 & \, \, \quad 0 \\
 & -i \chi_2  \\
 & \quad i \chi_1
 \end{aligned}
 \right),
 \quad
 \si_1^d \chi =\left(
 \begin{aligned}
 & \quad \chi_0 \\
 & -\chi_1  \\
 & \,\,  \quad 0
 \end{aligned}
 \right),
 \quad
  \si_2^d \chi =\frac{1}{\sqrt 3}\left(
 \begin{aligned}
 & \quad \chi_0 \\
 & \quad \chi_1  \\
 & -2 \chi_2
 \end{aligned}
 \right).
 \]

 In arbitrary dimension it is more convenient to work directly in complex coordinates $w_k, \bar w_k$
 (rather than playing with their real and imaginary parts).
 Again direct substitution of the above Gell-Mann matrices into the equation \eqref{eqqufiBlinnind3w}
 (we omit the lengthy by direct calculations) shows the following analog of Proposition \ref{proponPaulidif}.

 \begin{prop}
 \label{GellManncancel}
  Equation \eqref{eqqufiBlinnind3w} with $L_j$ chosen as the 8 Gell-Mann matrices and written
  for vanishing $H$ takes the form
  \begin{equation}
\label{eqqufiGellM}
\begin{aligned}
 & dw_1=(1-w_1^2) \, dY_t^{01,s} - w_1 w_2 \, dY_t^{02,s}+w_2 \, dY_t^{12,s} \\
& \quad\quad  +i(1+w_1^2) \, dY_t^{01,s} +i w_1 w_2 \, dY_t^{02,a}-i w_2 \, dY_t^{12,a}
 -2w_1 \, dY_t^{1,d} \\
& dw_2=-w_1w_2 \, dY_t^{01,s} +(1- w_2^2) \, dY_t^{02,s}+w_1 \, dY_t^{12,s} \\
& \quad\quad  +iw_1w_2 \, dY_t^{01,s} +i (1+ w_2^2) \, dY_t^{02,a}+i w_1 \, dY_t^{12,a}
 -w_2 \, dY_t^{1,d}-\sqrt 3 w_2  \, dY_t^{2,d}.
 \end{aligned}
 \end{equation}
 (with all terms with $dt$ vanishing), and exactly the same form has this equation when rewritten in terms
 of the innovation process, which is now an $8$-dimensional standard Wiener process with the coordinates
 \[
 dB_t^{jk,s}= dY_t^{jk,s}-2\langle \si_{jk}^s \rangle_W \, dt,
 \quad
 dB_t^{jk,a}= dY_t^{jk,a}-2\langle \si_{jk}^a \rangle_W \, dt,
 \quad
 dB_t^{k,d}= dY_t^{k,d}-2\langle \si_{k}^d \rangle_W \, dt.
 \]
 Finally the diffusion operator $D$ arising from the stochastic differential
 equation \eqref{eqqufiGellM} has the form  $D=2 \De_{pro}$, where $ \De_{pro}$ is the
 major (second order) part of the Laplace-Beltrami operator on the complex projective space $P\C^2$:
   \[
 \De_{pro}S(w_1,w_2)=(1+\sum_j|w_j|^2)
 \bigl[( 1+|w_1|^2)\frac{\pa^2S}{\pa w_1 \pa \bar w_1}
 +( 1+|w_2|^2))\frac{\pa^2S}{\pa w_2 \pa \bar w_2}
 \]
  \begin{equation}
\label{eqLaplRiemProjcom}
+w_1\bar w_2 \frac{\pa^2S}{\pa w_1 \pa \bar w_2}+\bar w_1 w_2 \frac{\pa^2S}{\pa \bar w_1 \pa w_2}
\bigr].
 \end{equation}
  \end{prop}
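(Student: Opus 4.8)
The plan is to establish all three assertions by direct substitution of the eight Gell-Mann matrices into the general qudit filtering equation \eqref{eqqufiBlinnind3w}, exactly paralleling the proof of Proposition \ref{proponPaulidif} but now carried out in the holomorphic coordinates $w_1,w_2$ together with their conjugates. First I would record the action of each matrix on the vector $W=(1,w_1,w_2)$, reading off the triples $((L_jW)_0,(L_jW)_1,(L_jW)_2)$ for $L_j\in\{\si^s_{jk},\si^a_{jk},\si^d_1,\si^d_2\}$; these are already displayed in the excerpt and render the remaining algebra mechanical.

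For the vanishing of the drift in the $Y$-form (the form \eqref{eqqufiGellM} with $H=0$) two cancellations must be checked. The term $\frac12[\sum_j w_k(L_j^*L_jW)_0-(L_j^*L_jW)_k]$ vanishes because $\sum_j L_j^2$ is a scalar: a short computation gives $\sum_{j<k}(\si^s_{jk})^2=\sum_{j<k}(\si^a_{jk})^2=2\1$ and $(\si^d_1)^2+(\si^d_2)^2=\frac43\1$, so $\sum_j L_j^2=\frac{16}{3}\1$ (the quadratic Casimir of $SU(3)$ in the fundamental representation), whence $(L_j^*L_jW)_0=\frac{16}{3}$, $(L_j^*L_jW)_k=\frac{16}{3}w_k$, and the bracket is identically zero. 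The genuinely combinatorial step is the vanishing of the quadratic drift $\sum_j[w_k(L_jW)_0^2-(L_jW)_0(L_jW)_k]$; I would substitute the eight triples and collect monomials, finding for $k=1$ that the $\si^s_{01}$ contribution $w_1^3-w_1$ is killed by the $\si^a_{01}$ contribution $-w_1^3-w_1$ together with $2w_1$ from $\si^d_1$, while the $w_1w_2^2$ terms cancel between $\si^s_{02}$ and $\si^a_{02}$ and the $\si^s_{12},\si^a_{12},\si^d_2$ contributions vanish outright; symmetrically for $k=2$. Reading off the surviving $dY^j$ coefficients then reproduces \eqref{eqqufiGellM}.

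For the innovation-process assertion I would substitute $dY^j=dB^j+2\langle\si_j\rangle_W\,dt$ and show that the induced drift $2\sum_j\langle\si_j\rangle_W[(L_jW)_k-w_k(L_jW)_0]$ vanishes identically. Writing $\langle\si_j\rangle_W=(W,\si_jW)/(W,W)$ and using self-adjointness (so each $(W,\si_jW)$ is real), I would again pair the symmetric and antisymmetric blocks; for $dw_1$ the surviving pieces are $2w_1(1-|w_1|^2)$ from the $01$-block, $-2w_1|w_2|^2$ from the $02$-block, $+2w_1|w_2|^2$ from the $12$-block, and $-2w_1+2w_1|w_1|^2$ from $\si^d_1$, which cancel. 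This is the most delicate cancellation, since it mixes $w$ and $\bar w$ and, unlike the Casimir step, has no structural shortcut; I expect this bookkeeping to be the main obstacle.

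Finally, for the identification $D=2\De_{pro}$ I would apply the complex Ito formula, assembling the second-order operator from the quadratic covariations $dw_k\,dw_l$, $dw_k\,d\bar w_l$, $d\bar w_k\,d\bar w_l$ computed from $dY^i\,dY^j=\de_{ij}\,dt$ (so that $dw_k\,d\bar w_l=\sum_j c_k^j\,\overline{c_l^j}\,dt$, with $c_k^j$ the noise coefficients of \eqref{eqqufiGellM}). The purely holomorphic covariations vanish — for instance the contributions to $dw_1\,dw_1$ telescope to $(1-w_1^2)^2-(1+w_1^2)^2+4w_1^2=0$, and likewise $dw_1\,dw_2=0$ — so no $\pa^2/\pa w_k\pa w_l$ terms appear, matching the Kähler form of \eqref{eqLaplRiemProjcom}. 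The mixed covariations give $dw_1\,d\bar w_1=2(1+\sum_j|w_j|^2)(1+|w_1|^2)\,dt$ and $dw_1\,d\bar w_2=2(1+\sum_j|w_j|^2)w_1\bar w_2\,dt$, which are exactly twice the coefficients of $\pa^2S/\pa w_1\pa\bar w_1$ and $\pa^2S/\pa w_1\pa\bar w_2$ in \eqref{eqLaplRiemProjcom}; the analogous computation in $w_2$ completes the match and yields $D=2\De_{pro}$. As a conceptual check one may note that $P\C^2=SU(3)/U(2)$ is a rank-one symmetric space on which the Laplace-Beltrami operator is, up to scale, the unique invariant second-order operator, so once the drift is shown to vanish the identity is essentially forced by the $SU(3)$-symmetry of the Gell-Mann construction; the direct computation above, however, is self-contained and avoids making that equivariance precise.
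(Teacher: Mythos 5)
Your proposal is correct and follows exactly the route the paper takes: the paper's "proof" is simply the statement that one substitutes the eight Gell-Mann matrices into \eqref{eqqufiBlinnind3w} and verifies the cancellations by direct (omitted) calculation, which is precisely what you carry out, and your intermediate values (the Casimir $\sum_j L_j^2=\tfrac{16}{3}\1$, the blockwise cancellation of the quadratic drift and of the innovation-induced drift, and the quadratic covariations $dw_k\,d\bar w_l$) all check out. No gaps.
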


Of course there exists a characterization of all collections of $L_j$ with the same property,
analogous to Proposition \ref{proponspecardif2dimr}.

For a quantum system in $\C^{n+1}$ there are $(n^2+2n)$ generalized Pauli matrices.
Choosing these matrices as the coupling operators in a homodyne detection scheme will lead analogously
to the invariant BM on the complex projective space $P\C^n$.

\section{Theory of drift control and games on Riemannian manifolds}
\label{secgameon Riemann}

Now we shall develop the theory of the classical or mild solutions to the Hamilton-Jacobi-Bellman-Isaacs
equations arising in the stochastic control and differential games on compact Riemannian manifolds
with a controlled drift and the fixed underlying Markov process being the standard Brownian motion on $M$.
In the next section it will be used to build the theory of dynamic quantum games that can be reduced to
such stochastic games under the special arrangement homodyne detection as shown in previous sections.
Let
  \begin{equation}
\label{Lapgeom}
\De_{LB} \phi ={div} \, (\nabla \phi) =\frac{1}{\sqrt {\det g}}
\sum_{j,k}\frac{\pa}{\pa x_j} \left(\sqrt {\det g} \, g^{jk}\frac{\pa}{\pa x_k}\right)
 \end{equation}
 denote the Laplace-Beltrami operator on a compact Riemannian manifold $(M,g)$ of dimension $N$,
with the Riemannian metric given by the matrix $g=(g_{jk})$ and its inverse matrix $G=(g^{jk})$.
Let $K(t,x,y)$ be the corresponding heat kernel, that is, $K(t,x,y)$ is the solution of the corresponding heat equation
$(\pa K/\pa t)=\De_{LB} K$ as a function of $(t>0,x\in M)$ and has the Dirac initial condition $K(0,x,y)=\de_y(x)$.
It is well known that the Cauchy problem for this heat equation is well posed in $M$ and the resolving operators
 \begin{equation}
\label{eqheatsem}
S_tf(x)=\int_M K(t,x,y) f(y) \, dv(y),
\end{equation}
where $dv(y)$ is the Remannian volume on $M$, form a strongly continuous semigroup of contractions
(the Markovian semigroup of the Brownian motion in $M$) in the space $C(M)$
of bounded continuous functions on $M$, equipped with the sup-norm. Let $C^1(M)$ denote the space of
continuously differentiable functions on $M$ equipped with the norm
\[
\|f\|_{C^1(M)}=\sup_x|f(x)|+\sup_x \|\nabla f(x)\|_M,
\]
where in local coordinates
\[
\|\nabla f(x)\|_M^2=\left(\nabla f(x), G(x)\nabla f(x)\right)
=\sum_{jk} g^{jk} \frac{\pa f}{\pa x_j} \frac{\pa f}{\pa x_k}.
\]

The key properties of this semigroup needed for our theory are the following smoothing
and smoothness preservation properties.

 \begin{prop}
 \label{propheatsemomansmoo}
 (i) The operators $S_t$ are smoothing:
   \begin{equation}
\label{eq1propheatsemomansmoo}
\|S_tf\|_{C^1(M)}\le C t^{-1/2} \|f\|_{C(M)}
 \end{equation}
 with a constant $C$, uniformly for any compact interval of time.

   (ii) The operators $S_t$ are smoothness preserving:
   \begin{equation}
\label{eq2propheatsemomansmoo}
\|S_tf\|_{C^1(M)}\le C \|f\|_{C^1(M)}
 \end{equation}
 with a constant $C$, uniformly for any compact interval of time.
\end{prop}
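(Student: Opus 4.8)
The plan is to derive both estimates from classical Gaussian bounds on the heat kernel $K(t,x,y)$ and its spatial gradient on a compact Riemannian manifold, together with the semigroup property. First I would record the standard short-time heat-kernel estimates: on a compact $(M,g)$ there are constants $c,C$ and $T_0>0$ such that for $0<t\le T_0$,
\[
0\le K(t,x,y)\le C\,t^{-N/2}\exp\!\left(-\frac{d(x,y)^2}{c\,t}\right),
\qquad
\|\nabla_x K(t,x,y)\|_M\le C\,t^{-(N+1)/2}\exp\!\left(-\frac{d(x,y)^2}{c\,t}\right),
\]
where $d(\cdot,\cdot)$ is the Riemannian distance. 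These are the Li--Yau type bounds together with the companion gradient estimate; on a compact manifold they hold uniformly, and for $t$ on any compact interval bounded away from $0$ the kernel and its gradient are simply bounded, so the whole statement reduces to controlling the small-$t$ regime.

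For part (i), differentiating \eqref{eqheatsem} under the integral sign gives
\[
\nabla_x S_tf(x)=\int_M \nabla_x K(t,x,y)\, f(y)\, dv(y),
\]
so that, using the gradient bound and estimating $\|f\|_{C(M)}$ out of the integral,
\[
\|\nabla_x S_tf(x)\|_M\le C\,t^{-(N+1)/2}\,\|f\|_{C(M)}\int_M \exp\!\left(-\frac{d(x,y)^2}{c\,t}\right) dv(y).
\]
The key computation is that the remaining integral is $O(t^{N/2})$ uniformly in $x$: in geodesic normal coordinates the volume element is comparable to Lebesgue measure near the diagonal, and the Gaussian integral over $\R^N$ contributes exactly the factor $t^{N/2}$, while the contribution from $d(x,y)$ bounded below is exponentially small. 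This yields the bound $t^{-(N+1)/2}\cdot t^{N/2}=t^{-1/2}$, which is precisely \eqref{eq1propheatsemomansmoo}; the sup-norm part $\sup_x|S_tf(x)|\le\|f\|_{C(M)}$ follows from the contraction property already stated in the text.

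For part (ii), the clean approach is to use commutation rather than to differentiate the kernel directly. Since $\De_{LB}$ acts on scalars and the manifold is fixed, the gradient $\nabla$ intertwines the scalar heat semigroup $S_t$ with the heat semigroup $\tilde S_t$ acting on $1$-forms (the Hodge--de Rham heat flow): writing $\nabla S_t f=\tilde S_t\,\nabla f$ via the Weitzenb\"ock/Bochner identity, and using that $\tilde S_t$ is a bounded (in fact, for nonnegative Ricci curvature, contraction) semigroup on continuous $1$-forms, one obtains $\|\nabla S_t f\|_{C(M)}\le C\,\|\nabla f\|_{C(M)}$ uniformly on compact time intervals. Combined with $\|S_tf\|_{C(M)}\le\|f\|_{C(M)}$ this gives \eqref{eq2propheatsemomansmoo}. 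The main obstacle is part (ii): on a general compact manifold the Ricci curvature need not be nonnegative, so the form-semigroup is only bounded (with a constant growing like $e^{Kt}$ from the Weitzenb\"ock lower curvature bound) rather than contractive, but since the estimate is only claimed uniformly on compact time intervals, the factor $e^{Kt}$ is harmless and can be absorbed into $C$. An alternative, avoiding the Bochner machinery, is to interpolate: split $S_t=S_{t/2}S_{t/2}$, apply the smoothing bound \eqref{eq1propheatsemomansmoo} of part (i) to the outer factor and boundedness to the inner one, which gives a blow-up $t^{-1/2}$ that is too strong near $t=0$; hence the commutation argument (or a direct gradient-of-kernel estimate showing $\int_M\|\nabla_xK(t,x,y)\|_M\,dv(y)\le C$ uniformly, which is the harder uniform integrability statement) is the genuinely necessary ingredient for smoothness preservation.
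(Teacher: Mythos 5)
Your part (i) is essentially the paper's own argument: the author likewise invokes the pointwise gradient bound $\|\nabla_x K(t,x,y)\|_M\le C t^{-N/2}t^{-1/2}\exp\{-d^2(x,y)/((4+\de)t)\}$ (citing Davies) and differentiates under the integral, the factor $t^{N/2}$ from the Gaussian integral giving \eqref{eq1propheatsemomansmoo}; so there is nothing to compare there. For part (ii) you take a genuinely different route. The paper argues via the parametrix representation $K=K_{as}+\int_0^t K(t-s,\cdot,z)F(s,z,y)\,ds$: the gradient of the correction term is bounded by \eqref{eqheatsemder}, and the gradient of $K_{as}$ is estimated exactly as for heat kernels in $\R^d$, which yields the uniform bound $\int_M\|\nabla_xK(t,x,y)\|_M\,dv(y)\le C$ that you correctly identify as the crux. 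You instead use the intertwining $d\,e^{t\De_{LB}}f=e^{t\De_H}\,df$ with the Hodge--de~Rham semigroup on $1$-forms and the Weitzenb\"ock domination $|e^{t\De_H}\om|\le e^{Kt}e^{t\De_{LB}}|\om|$ for $\mathrm{Ric}\ge -K$, absorbing $e^{Kt}$ into the constant on a compact time interval; this is sound, and is in fact the ``alternative proof of (ii) by-passing estimates from (i)'' that the paper's remark alludes to (there phrased via SDEs on $(M,g)$, i.e.\ stochastic parallel transport with the Ricci correction, which is the probabilistic face of the same Bochner identity). Your route is cleaner and structurally explains why no smallness in $t$ is needed, at the cost of importing the $1$-form heat semigroup and its sup-norm domination; the paper's parametrix route is more elementary in its ingredients and self-improving to the quantitative kernel estimate, but requires constructing $K_{as}$ and controlling the error term $F$. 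You also correctly flag that the naive splitting $S_t=S_{t/2}S_{t/2}$ combined with (i) fails near $t=0$, which is the trap the paper's phrasing ``follows from \eqref{eqheatsemder}'' might otherwise invite.
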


\begin{remark} This result is possibly known, but the author did not find any precise reference.
It is standard for diffusions in $\R^d$, but seemingly not so standard for manifolds.
We sketch a proof briefly. An alternative proof of (ii) (by-passing estimates from (i))
and its extension to higher derivatives can
be built on the theory of SDEs on $(M,g)$.
\end{remark}

\begin{proof} (i) This is a consequence of the well known estimate for the derivatives of the heat kernel
on a compact Riemannian manifold (see Theorem 6 in \cite{Davies89}):
 \begin{equation}
\label{eqheatsemder}
\|\nabla K(t,x,y)\|_M \le C(\de,N) t^{-N/2} t^{-1/2} \exp \left\{ -\frac{d^2(x,y)}{(4+\de)t}\right\},
\end{equation}
with any $\de>0$ and a constant $C(\de,N)$, where the derivative $\nabla$ is taken with respect to $x$,
and where $d$ is the Riemannian distance in $M$.
In fact, differentiating \eqref{eqheatsem} and using \eqref{eqheatsemder} yields \eqref{eq1propheatsemomansmoo}.

(ii) This follows from \eqref{eqheatsemder} and the method of parametrix (frozen coefficients) approximation.
This method (see e.g. formula (5.60) in \cite{Kolbook19}) starts by representing $K$ in terms of its asymptotics $K_{as}$
and the integral correction as
   \begin{equation}
\label{eq3propheatsemomansmoo}
K(t,x,y)= K_{as}(t,x,y)+\int_0^t K(t-s,x,z)F(s,z,y) \, ds,
\end{equation}
where $F$ is the error term in the equation for $K_{as}$, that is
\[
\frac{\pa K_{as}}{\pa t}(t,x,y)-\De_{LB} K_{as}(t,x,y)=-F(t,x,y).
\]
From \eqref{eqheatsemder} it follows that the derivative of the second term in \eqref{eq3propheatsemomansmoo}
is bounded and thus the estimate for the derivative reduces to the derivatives arising from $K_{as}$,
and these estimates are standard and are performed as in the case of heat equations in $\R^d$.
\end{proof}

For the stochastic control of diffusions on $(M,g)$ with the second order part being fixed as $\De_{LB}$,
and where control is carried out via the drift only,
the corresponding HJB equation is the equation
\begin{equation}
\label{eqHJBmanif}
\frac{\pa f}{\pa t}= \De_{LB} f +H(x,\nabla f (x)),
\end{equation}
where the Hamiltonian function is of the form
\begin{equation}
\label{eqHJBmanif1}
H(x,p)=\sup_{u\in U} [g(x,u)p+J(x,u)],
\end{equation}
where $U$ is the set of possible controls and $g,J$ are some continuous functions.
In case of zero-sum stochastic two-player games with the so-called Isaac's condition
the Hamiltonian function takes the form
\begin{equation}
\label{eqHJBmanif2}
H(x,p)=\sup_{u\in U}\inf_{v\in V} [g(x,u,v)p+J(x,u,v)]=\inf_{v\in V}\sup_{u\in U} [g(x,u,v)p+J(x,u,v)].
\end{equation}
The possibility to exchange $\sup$ and $\inf$ here is called Isaac's condition. It is fulfilled, in particular,
when the control of two players can be separated in the sense that the Hamiltonian becomes
\begin{equation}
\label{eqHJBmanif3}
H(x,p)=\sup_{u\in U} [g_1(x,u)p+J_1(x,u)]+\inf_{v\in V}[g_2(x,v)p+J_2(x,v)]+J_0(x).
\end{equation}

It is worth recalling here that though the theory of HJB is often built (for simplicity)
 for the Cauchy problem of equation \eqref{eqHJBmanif} in forward time, in the control theory
 it appears more naturally as the backward Cauchy problem for the equation
 \begin{equation}
\label{eqHJBmanifbac}
\frac{\pa f}{\pa t}+ \De_{LB} f +H(x,\nabla f (x))=0, \quad t\in [0,T],
\end{equation}
with a given terminal condition $f_T$ at some time $T$. This way of writing the HJB equation
becomes unavoidable whenever any of the parameters of the problem are explicitly time dependent.

Let us now consider the general Hamilton-Jacobi-Bellman -Isaacs equation \eqref{eqHJBmanif}
 with $H$ being a Lipschitz continuous
function of its two variables. It is well known (and easy to see) that if $f$ is a classical solution of
\eqref{eqHJBmanif} with the initial condition $Y$, then $f$ solves also the following integral equation
\begin{equation}
\label{eqdefHJBsmoothmild}
f_t=e^{t\De_{LB}}Y +\int_0^t e^{(t-s)\De_{LB}} H\left(.,\frac{\pa f_s}{\pa x}(.)\right) \, ds,
\end{equation}
referred to as the {\it mild form} of \eqref{eqHJBmanif}. Solutions to the mild equation
\eqref{eqdefHJBsmoothmild} (which may not solve \eqref{eqHJBmanif}, because of the lack of sufficient smoothness)
are often referred to as {\it mild solutions} to \eqref{eqHJBmanif}.

The following result gives the well-posedness of the HJB-Isaacs equation  \eqref{eqHJBmanif}
with explicit estimates for the growth of solutions and their continuous dependence on initial data.
\begin{theorem}
\label{thHJBsmoothwel}
 Let $H(x,p)$ be a continuous function on the cotangent bundle $T^*M$ to the compact Riemannian manifold $(M,g)$ such that
\begin{equation}
\label{eq1athHJBsmoothwel}
|H(x,p_1)-H(x,p_2)| \le L_H \|p_1-p_2\|_M
\end{equation}
with a constant $L_H$. Then for any $Y\in C^1(M)$ there exists
 a unique solution $f_. \in C([0,T], C^1(M))$
of equation \eqref{eqdefHJBsmoothmild}. Moreover, for all $t\le T$,
 \[
\|f_t(Y)-Y\|_{C^1(M)} \le E_{1/2}(C L_H \Ga(1/2)t^{1/2})
\]
 \begin{equation}
\label{eq3athHJBsmoothwel}
\times \left(2t^{1/2} C (h +L_H\|Y\|_{C^1(M)})+ \|(e^{t\De_{LB}}-1)Y\|_{C^1(M)}\right),
\end{equation}
where $h=\sup_x|H(x,0)|$ and $C$ is from Proposition  \ref{propheatsemomansmoo}.
and the solutions $f_t(Y_1)$ and $f_t(Y_2)$ with different initial data $Y_1,Y_2$ enjoy
the estimate
 \begin{equation}
\label{eq4athHJBsmoothwel}
\|f_t(Y_1)-f_t(Y_2)\|_{C^1(M)} \le C \|Y_1-Y_2\|_{C^1(M)} E_{1/2}(C L_H \Ga(1/2)t^{1/2}),
\end{equation}
where $E_{1/2}$ denotes the Mittag-Leffler function.
\end{theorem}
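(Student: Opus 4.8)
The plan is to solve the mild equation \eqref{eqdefHJBsmoothmild} by a fixed-point argument in the Banach space $C([0,T],C^1(M))$, applied to the map
\[
\Phi(f)_t = e^{t\De_{LB}}Y + \int_0^t e^{(t-s)\De_{LB}}\, H\!\left(\cdot,\frac{\pa f_s}{\pa x}\right)\,ds.
\]
The entire argument rests on the interplay between the derivative loss in the nonlinearity and the smoothing gain of the heat semigroup. Since $H(\cdot,\nabla f_s)$ depends on the first derivatives of $f_s$, it is controlled only in the sup-norm $C(M)$ by $\|f_s\|_{C^1(M)}$ (via \eqref{eq1athHJBsmoothwel}); the operator $e^{(t-s)\De_{LB}}$ then recovers one derivative at the price of the singular factor $(t-s)^{-1/2}$ supplied by the smoothing estimate \eqref{eq1propheatsemomansmoo} of Proposition \ref{propheatsemomansmoo}. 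Because $(t-s)^{-1/2}$ is integrable in $s$, with $\int_0^t (t-s)^{-1/2}\,ds = 2t^{1/2}$, these estimates close.

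First I would verify that $\Phi$ maps $C([0,T],C^1(M))$ into itself. The $C^1$-bound on $\Phi(f)_t$ follows from the smoothness-preservation estimate \eqref{eq2propheatsemomansmoo} applied to the first term and from \eqref{eq1propheatsemomansmoo} applied to the integral term, after bounding $\|H(\cdot,\nabla f_s)\|_{C(M)} \le h + L_H\|f_s\|_{C^1(M)}$ through \eqref{eq1athHJBsmoothwel}, where $h=\sup_x|H(x,0)|$. Continuity of $t\mapsto\Phi(f)_t$ in $C^1(M)$ uses the strong continuity of the semigroup on the smooth first term together with a standard splitting of the integral near the singularity $s=t$.

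The central estimate is the contraction bound. For $f,\tilde f\in C([0,T],C^1(M))$, combining \eqref{eq1propheatsemomansmoo} with the Lipschitz hypothesis \eqref{eq1athHJBsmoothwel} gives
\[
\|\Phi(f)_t-\Phi(\tilde f)_t\|_{C^1(M)} \le C L_H \int_0^t (t-s)^{-1/2}\,\|f_s-\tilde f_s\|_{C^1(M)}\,ds.
\]
This is a Volterra inequality with a weakly singular kernel, so ordinary Gronwall does not apply directly, and this is the technical heart of the proof. I would iterate the inequality and use the Beta-function identity $\int_0^t (t-s)^{-1/2}s^{-1/2}\,ds=\Ga(1/2)^2=\pi$ and its higher analogues, which show that the $n$-fold iterated kernel equals $(C L_H\Ga(1/2))^n\,\Ga(n/2)^{-1}(t-s)^{n/2-1}$; hence the $n$-fold composition $\Phi^n$ is a contraction once $n$ is large, yielding a unique fixed point $f_.\in C([0,T],C^1(M))$ by the generalized contraction principle. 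Summing the iterated kernels is exactly the generalized (fractional) Gronwall--Henry lemma, and it is what produces the Mittag-Leffler factor $E_{1/2}(C L_H\Ga(1/2)t^{1/2})$.

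Finally the two explicit estimates follow by specializing this machinery. For \eqref{eq4athHJBsmoothwel} I would apply the generalized Gronwall lemma to $u(t)=\|f_t(Y_1)-f_t(Y_2)\|_{C^1(M)}$, whose inhomogeneous term is $\|e^{t\De_{LB}}(Y_1-Y_2)\|_{C^1(M)}\le C\|Y_1-Y_2\|_{C^1(M)}$ by \eqref{eq2propheatsemomansmoo}. For the growth bound \eqref{eq3athHJBsmoothwel} I would write $f_t-Y=(e^{t\De_{LB}}-1)Y+\int_0^t e^{(t-s)\De_{LB}}H(\cdot,\nabla f_s)\,ds$, bound the integral by $2t^{1/2}C(h+L_H\|f_s\|_{C^1(M)})$, and absorb the self-referential term through the same Mittag-Leffler Gronwall step. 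The main obstacle throughout is purely the weakly singular kernel: once the fractional Gronwall lemma is established, every assertion of the theorem follows by routine bookkeeping.
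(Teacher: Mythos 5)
Your proposal is correct and follows essentially the same route as the paper, which simply defers to the fixed-point argument of Section 6.1 of \cite{Kolbook19} combined with Proposition \ref{propheatsemomansmoo}: the contraction in $C([0,T],C^1(M))$ via the $(t-s)^{-1/2}$ smoothing kernel, the iteration of the weakly singular Volterra inequality through the Beta-function identities, and the resulting Mittag-Leffler (Henry--Gronwall) bounds are exactly the intended argument. Your derivations of \eqref{eq3athHJBsmoothwel} and \eqref{eq4athHJBsmoothwel} are the standard specializations and are consistent with the stated constants.
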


The proof of the theorem is identical to the corresponding proof given for the equations in $\R^d$ in
\cite{Kolbook19} (Section 6.1) and it follows essentially from the fixed point argument and
Proposition \ref{propheatsemomansmoo}.

Assuming additionally that $H$ is Lipshitz continuous in the first argument so that
\begin{equation}
\label{eq2thHJBsmoothwel1}
|H(x_1,p)-H(x_2,p)| \le L_H d(x_1,x_2) \, \|p\|_M,
\end{equation}
one can improve the result of Theorem \ref{thHJBsmoothwel} by showing
(in exactly the same way as for $\R^d$, see again \cite{Kolbook19}) that with the initial condition $Y$ from $C^2(M)$
the solution $f_t$ will belong to $C^2(M)$ for all $t$ and hence will be a (unique) classical solution to the Cauchy problem
of the HJB-Isaacs equation  \eqref{eqHJBmanif}.

Finally, the standard result of the stochastic control theory
(called the verification theorem, see e.g. \cite{FlemSon}) states that
a classical solution to the HJB-Isaacs equation yields in fact
the optimal cost for the corresponding stochastic control problem
or the minimax solution in case of zero-sum games.

\section{Towards the theory of dynamic games under the special homodyne detections}
\label{secqugameth}

Applying the results of the previous section in conjunction with the detection schemes of Sections
\ref{qugametosphere}, \ref{qugametoprsp} leads automatically to the theory of dynamic
quantum games under these detetion schemes.

In fact, assume that the hamiltonian operator $H$ decomposes into the three parts $H=H_0+uH_1+vH_2$,
where $H_0$ is the Hamiltonian operator of the "free" motion of an atom and the strength $u$ and $v$
of the application of the operators
$H_1$ and $H_2$ can be chosen strategically by the two players I and II respectively.
To simplify the formulas let us assume that allowed values of the control parameters lie
in certain symmetric intervals: $u\in [-U,U]$, $v\in [-V,V]$ with some constants $U,V\ge 0$,
The case of a pure control (not a game) corresponds to the choice $V=0$ and is thus automatically included.
Assume that players $I$ and $II$ play a standard dynamic zero-sum game with a finite time horizon $T$
meaning that the objective of $I$ is to maximize the payoff
 \begin{equation}
\label{eqcostfun}
P(t,W; u(.), v(.)) =\E \int_t^T    \langle J\rangle_{W(s)} \, ds +\langle F\rangle_{W(T)},
\end{equation}
where $J$ and $F$ are some operators expressing the current and the terminal costs of the game
(they may depend on $u$ and $v$, but we exclude this case just for simplicity) and $\E$ denotes the expectation
with respect to
the random trajectories $W(s)$ arising from dynamic \eqref{eqqufiBlinnind3w} under the strategic choice of
the controls $u$ and $v$ by the players.

The Isaacs condition \eqref{eqHJBmanif2} is fulfilled under our assumptions. Assuming
$\{L_j\}$ are chosen with our special detection scheme such that the corresponding diffusion operator
with vanishing $H$ coincides with the second order part of the Laplace-Beltrami operator on the complex projective space,
the HJB-Isaacs equation \eqref{eqHJBmanifbac} for the minimax value
 \begin{equation}
\label{eqcostfunopt}
S(t,W) =\max_{u(.)}\min_{v(.)}P(t,W; u(.), v(.))=\min_{v(.)} \max_{u(.)} P(t,W; u(.), v(.))
\end{equation}
of the game  takes the form
\[
0=\frac{\pa S}{\pa t} +(\al(W), \nabla S)+\De_{LB}S+\langle J\rangle_W
\]
\[
+\sup_u \left\{u \sum_k \left[ {Re}[iw_k(H_1W)_0-i(H_1W)_k]\frac{\pa S}{\pa x_k}
+ {Im} [iw_k(H_1W)_0-i(H_1W)_k]\frac{\pa S}{\pa y_k}\right]\right\}
\]
 \begin{equation}
\label{eqHJBIsforspecialhom}
+\inf_v\left\{ v\sum_k \left[{Re} [iw_k(H_2W)_0-i(H_2W)_k]\frac{\pa S}{\pa x_k}
+ {Im} [iw_k(H_2W)_0-i(H_2W)_k]\frac{\pa S}{\pa y_k}\right]\right\},
\end{equation}
where $\al$ includes the contributions arising from $H_0$ and from the 1st order terms
of the Laplace-Beltrami operator (if any). Equivalently, it can be rewritten as
\[
0=\frac{\pa S}{\pa t} +(\al(W), \nabla S)+\De_{LB}S+\langle J\rangle_W
\]
\[
+U \left| \sum_k \left[{Re} [iw_k(H_1W)_0-i(H_1W)_k]\frac{\pa S}{\pa x_k}
+ {Im} [iw_k(H_1W)_0-i(H_1W)_k]\frac{\pa S}{\pa y_k}\right] \right|
\]
 \begin{equation}
\label{eqHJBIsforspecialhom1}
-V \left| \sum_k \left[ {Re} [iw_k(H_2W)_0-i(H_2W)_k]\frac{\pa S}{\pa x_k}
+ {Im} [iw_k(H_2W)_0-i(H_2W)_k]\frac{\pa S}{\pa y_k}\right]\right|.
\end{equation}

According to the theory from the previous section, under, the backward Cauchy problem
for this equation with the terminal condition $\langle F\rangle_{W}$ has the unique classical solution that yields
the minimax value of our zero-sum game.

\section{Special homodyne detections leading to standard BM on tori}
\label{secqugametotori}

An alternative approach to simplify \eqref{eqqufiBlinnind3w} is to choose $L_j$ in a way allowing
for invariant manifolds and thus reducing the complexity (the dimension) of resulting controlled process.
Suppose we choose the coordinates $\chi$, where $H$ is diagonal, that is $h_{km}=0$ for $k\neq m$.
 It turns out (which seen by direct inspection) that if one chooses
$n$ operators $L_j$, $j=1,\cdots, n$, as anti-Hermitian
diagonal operators with only one non-vanishing elements, namely, with the entries
\[
(L_j)^{km}=ir_j \de^j_k\de^j_m
\]
with some real numbers $r_j$, then \eqref{eqqufiBlinnind3w} decomposes into the system of uncoupled equations
 \begin{equation}
\label{eqqufiBldecoup}
dw_k =iw_k(h_{00}-h_{kk}) \, dt -\frac12 r_k^2
w_k \, dt +ir_k w_k \, dY^k_t, \quad k=1, \cdots, n,
\end{equation}
from which it follows that $d(w_k\bar w_k)=0$ for all $k$, and thus the whole process lives on
the $n$-dimensional torus
$T_n^C=\{(w_1, \cdots, w_n):|w_k|=C_k\}$,
where the vector $C=\{c_m\}$ is specified by the initial condition.

Moreover, since all $L_j$ are anti-Hermitian, the innovation process $B_t=(B^k_t)$ coincides with
the output process $Y_t$, both forming the standard Brownian motion.

\begin{remark} Therefore, in case of a qubit, only one operator $L$ (with an element $ir$ in the right low corner
and zeros otherwise)
is sufficient to get $d(w\bar w)=0$. It is not difficult to see
that for a qubit any operator leading to this effect has the form $L+\al \1$ with a constant $\al \in \C$.
\end{remark}

Writing $w_k=|w_k|\exp\{i\phi_k\}$ we can rewrite \eqref{eqqufiBldecoup} (using Ito's formula, of course)
in terms of the angles as follows
  \begin{equation}
\label{eqqufiBldecoupan}
d\phi_k =(h_{00}-h_{kk}) \, dt +r_k \, dY^k_t, \quad k=1, \cdots, n,
\end{equation}
which is the BM on the torus $T_n^C$ with a constant drift.

This is of course simpler, than the invariant BM on the projective space $P\C^n$ obtained by choosing
$(n^2+2n)$ generalized Pauli operators. However, from the point of the application to control and games,
this homodyne detection can be used only in case when under any choice of control parameters $u\in U$
the resulting family of possible Hamiltonians $H(u)$ is a commuting family. Only in this case we can choose a basis where
all $H(u)$ are diagonal and thus treat them all with a single choice of anti-Hermitian operators $L_j$ above. The general theory
of games is then exactly the same as in Section \ref{secqugameth}, with the projective spaces substituted by the tori.
An exactly solvable example of such case will be given below.

\section{Example of exactly solvable model}
\label{secqugametotoriex}

In the case of a qubit, equation \eqref{eqqufiBldecoupan} reduces to the equation
\begin{equation}
\label{eqqufiBldecoupqub}
d\phi =(h_0-h_1) \, dt +r \, dB_t,
\end{equation}
with real constants $h_1, h_2,r$ and the standard one-dimensional BM $B_t$, thus describing the
BM on a circle with a drift.

Let us now choose $r=1$ and assume that two players I and II can control the strength and direction of the field
yielding the first and the second entries $h_0$ and $h_1$ respectively. Assume further that  the goal of player I
is to maximize the integral cost
\[
\int_t^T    \langle J\rangle_{W(s)} \, ds  +\langle F\rangle_{W(T)}
\]
with $W(t)=C e^{i\phi(t)}$ and some Hermitian operators $J=(J_{jk})$ and $F=(F_{jk})$, so that
\[
  \langle J\rangle_{W}=\frac{J_{00}+J_{01} w+\bar J_{01}\bar w +J_{11}|w|^2}{1+|w|^2}
  =\frac{J_{00}+2C \, {Re} (J_{01}C e^{i\phi}) +J_{11}C^2}{1+C^2}.
\]
Ignoring irrelevant constants the current cost function $\langle J\rangle_{W}$ rewrites as
$(a \cos\phi+b\sin \phi)$, or, by shifting $\phi$, even simpler as just $\cos \phi$.
Hence the corresponding HJB-Isaacs equation for this game becomes
 \[
\frac{\pa S}{\pa t} +\frac12\frac{\pa ^2S}{\pa \phi^2}+\cos \phi
+\max_u [uh_0\frac{\pa S}{\pa \phi}]-\max_v [vh_1\frac{\pa S}{\pa \phi}]=0.
\]
Assuming as above that $u$ and $v$ can be chosen from some intervals, this equation rewrites as
\begin{equation}
\label{eqHJBIsexact}
\frac{\pa S}{\pa t} +\frac12\frac{\pa ^2S}{\pa \phi^2}+\cos \phi
+\al \left|\frac{\pa S}{\pa \phi}\right|=0,
\end{equation}
with a real constant $\al$.

\begin{remark} Player I has an advantage if $\al>0$.
\end{remark}

Instead of the fixed time horizon problem, let us consider the corresponding stationary problem,
when one looks for the average payoff per unit time for a long lasting game. Analytically this means
searching for a solution to equation \eqref{eqHJBIsexact} with $S$ linearly dependent on $t$:
$S(t,\phi)=\la (T-t)+S(\phi)$ with some function $S(\phi)$. Here $\la$ is the average payoff per unit time and
$S(\phi)$ solves the stationary HJB equation
\begin{equation}
\label{eqHJBIsexactst}
\frac12 S''+\al |S'| +\cos \phi -\la=0.
\end{equation}

From the symmetry of the problem it is clear that $S(\phi)$ is an even periodic function of $\phi$,
and thus it has the boundary conditions
$S'(0)=S'(\pi)=0$. Moreover, $S$ is decreasing in $\phi$ on the interval $\phi\in [0,\pi]$
(since $\cos \phi$ has maximum at $\phi=0$),
so that on this interval equation \eqref{eqHJBIsexactst} turns to the equation
\begin{equation}
\label{eqHJBIsexactst1}
\frac12 S''-\al S' +\cos \phi -\la=0.
\end{equation}
Hence for the function $f=S'$ we get the problem
\begin{equation}
\label{eqHJBIsexactst2}
f'-\al f +\cos \phi -\la=0, \quad f(0)=f(\pi)=0.
\end{equation}
 The stationary solution $S$ itself is defined up to a constant, as it should be,
 since what one is looking for is actually the average cost $\la$.

 The solution to equation in \eqref{eqHJBIsexactst2} with the initial condition $f(0)=0$ is
\begin{equation}
\label{eqHJBIsexactst3}
f(\phi)=\int_0^{\al \phi} e^{\al(\phi-s)}(\la -\cos s) \, ds.
\end{equation}
Hence, from $f(\pi)=0$ one finds
\begin{equation}
\label{eqHJBIsexactst4}
\la =\frac{\int_0^{\pi} e^{\al(\pi-s)}\cos s  \, ds}{\int_0^{\pi} e^{\al(\pi-s)} \, ds}
=\frac{\al^2}{\al^2+1} \,\, \frac{e^{\al\pi}+1}{e^{\al\pi}-1}.
\end{equation}

Similarly one can solve the infinite horizon problem with discount, for which one searches for
$S$ of the form $S(t,\phi) =e^{-t\de}S(\phi)$, with a fixed discount factor $\de>0$, and the stationary HJB equation
writes down as
\begin{equation}
\label{eqHJBIsexactdi}
\frac12 S''+\al |S'| +\cos \phi -\de S=0.
\end{equation}
The same monotonicity and evenness conditions as above lead now to the problem
 \begin{equation}
\label{eqHJBIsexactdi1}
\frac12 S''-\al S' +\cos \phi -\de S=0, \quad S'(0)=S'(\pi)=0
\end{equation}
on the interval $[0,\pi]$.
This linear problem is easy to solve. First one finds the general solution of the equation in the form
\[
S=Ae^{a_1\phi}+Be^{a_2\phi}+a\cos \phi +b\sin \phi
\]
with
\[
a_{1,2}=\al \pm \sqrt{\al^2+2\de}, \quad b=\frac{4\al}{4\al^2+(1+2\de)^2}, \quad a=b\frac{1+2\de}{2\al},
\]
and then the boundary condition gives the linear system
\[
A a_1+B a_2+b=0, \quad A a_1 e^{a_1\pi}+B a_2 e^{a_2\pi}-b=0,
\]
from which $A$ and $B$ are found.

\section{Special homodyne detections leading to the standard BM on Euclidean spaces}
\label{secqugametoEuc}

As the third model of special homodyne detections we introduce the arrangements,
under which the resulting filtering process is the standard BM in $\R^n$, that is
a process govern by the plain Laplacian, though perturbed by a drift with unbounded
coefficients. We need here $2n$ operators $L_j$ for a quantum system in $\C^{n+1}$
(unlike  $(n^2+2n)$ Pauli matrices leading to the invariant BM and $n$ matrices
leading to the tori). We shall take $n$ operators $L_{j1}=(l_{j1}^{kl})$ and $n$
operators $L_{j2}=(l_{j2}^{kl})$ $j=1, \cdots, n$, such that the only non-vanishing
entries of their matrices are the elements of the first column $l_{j1}^{k0}$ and
$l_{j2}^{k0}$ with $k=1, \cdots, n$. More concretely, let
\begin{equation}
\label{eqchoiceEucl}
l_{j1}^{k0}=\de^j_k, \quad l_{j2}^{k0}=i\de^j_k.
\end{equation}

Under this choice $(L_{j1}\chi)_0=(L_{j2}\chi)_0=0$ and
\[
L_{j1}^*L_{j1}=L_{j1}^*L_{j1}
=\left(
 \begin{aligned}
 & 1 \quad 0 \quad \cdots \quad 0 \\
 & 0 \quad 0 \quad \cdots \quad 0 \\
 &  \quad \quad \cdots  \\
 & 0 \quad 0 \quad \cdots \quad 0
 \end{aligned}
 \right)
\]
for all $j$, and equation  \eqref{eqqufiBlinnind3w} takes the form
 \begin{equation}
\label{eqchoiceEucleq}
dw_k =i[w_k(h_{00}+\sum_l h_{0l}w_l)-h_{k0}-\sum_l h_{kl} w_l]\, dt+nw_k \, dt
+dY^{k1}_t+i\, dY^{k1}_t.
\end{equation}

The corresponding diffusion operator for vanishing $H$ gets the form
 \begin{equation}
\label{eqchoiceEucloper}
DS(x,y)=n\sum_k \left(x_k\frac{\pa S}{\pa x_k}+y_k\frac{\pa S}{\pa y_k}\right)
+\sum_k \left(\frac{\pa ^2S}{\pa x_k^2}+\frac{\pa ^2S}{\pa y_k^2}\right)
\end{equation}
in the real coordinates $x_k,y_k$
such that $w_k=x_k+iy_k$, that is, it defines a Gaussian (Ornstein-Uhlenbeck)
diffusion in $\R^{2n}$ and its major second order part is just the standard Laplacian in $\R^{2n}$.

From the first sight this third homodyne arrangement seems to be the simplest one. However,
the catch is that, unlike the cases of compact spaces above (projective spaces and tori), where
all coefficients are automatically bounded, here the Hamiltonian controlled part has unbounded
drift (generally speaking of quadratic growth), which complicates the investigation of the corresponding
HJB equations in $\R^{2n}$. In the case of a commuting set of controlled Hamiltonians, when all matrices
$H$ has only diagonal coefficients, the quadratic term disappears and the controlled drift coefficient
grows linearly, which makes it amenable to analysis, see e.g. \cite{DiffGameLinearDrift15}. In this paper
we avoid dealing systematically
with unbounded coefficients and will not develop a theory of control in this case.

\begin{remark}
%(i) Though the dynamics arising from the above scheme can be linear (if all Hamiltonian operators commute),
%the resulting control problem will be not linear quadratic under the natural cost functions, because such
% functions are given by the mean values of some operators on the states, which are quadratic with respect to
%normalized states, and not with respect to the projective coordinates $w_k$.
%(ii)
Unlike the previous cases with the projective spaces and tori, in the scheme leading to
\eqref{eqchoiceEucloper} the diffusion operator written in terms of the innovation processes will
be different from  \eqref{eqchoiceEucloper} leading to another complication of the theory, and even to
possibly two different formulations of the control problems.
\end{remark}

\section{Zero-sum games on two coupled atoms}
\label{sectwoatoms}

The examples above can be looked at the dynamic extensions of the initial game of Meyer \cite{MeyerD99}
in the sense that two players are acting on the same atom. In the popular EWL protocol \cite{EWL99}
and the MW protocol \cite{MW00} the players act simultaneously on two different qubits, with the interaction
between the qubits taken into account by choosing entangled initial states and taking appropriate measurement.

The theory of Section \ref{secqugameth} is general enough to accommodate games of two players on different atoms,
moreover with the genuine  interaction of atoms taken into account.
Namely, suppose the atoms of players I and II, playing a zero-sum game, are two quantum systems, in $\C^{n+1}$ each.
The combined Hilbert space is thus $\C^{n+1}\otimes \C^{n+1}$, so that its vectors can be written
as $\chi=\sum \chi_{jk} e_j\otimes e_k$ with $\{e_k\}$ the standard basis in $\C^{n+1}$.
 Suppose player I (respectively II) can act on the first (resp. second) atom by
the controlled Hamiltonian operators $H_I(u)=(h^I_{jk}(u))$ (resp. $H_{II}(v)=(h^{II}_{jk}(v))$),
and the interaction between the atoms is given by an operator $A=(A_{jk,pq})$.

 \begin{remark}
 The standard physics choice of the interaction is the operator arising from possible exchange of photons,
 $A=a_1^*a_2+a_2^*a_1$, with the annihilation operators $a_1$ and $a_2$ of the two atoms.
 \end{remark}

The filtering equation \eqref{eqqufiBlinn} takes the form

 \begin{equation}
\label{eqqufiBlinntwoatom}
d\chi_{jk} =-i\sum_p (h^I_{jp}(u) \chi_{pk} +h^{II}_{pk}(v)\chi_{jp}) \, dt
-i\sum_{p,q} A_{jk,pq} \chi_{pq} \, dt +\cdots ,
\end{equation}
where by $\cdots$ we denoted the terms arising from the coupling with optical devices or from the uncontrolled
Hamiltonian operators of the atoms. As previously, we rewrite this equation in terms of the projective coordinates
$w_{jk}=\chi_{jk}/\chi_{00}$ as follows (where it is understood that $w_{00}=1$):
 \[
dw_{jk} =i\sum_p [w_{jk}(h^I_{0p}(u) w_{p0} +h^{II}_{p0}(v)w_{0p})
-(h^I_{jp}(u) w_{pk} +h^{II}_{pk}(v)w_{jp})] \, dt
\]
\begin{equation}
\label{eqqufiBlinntwoatompro}
+i\sum_{p,q} (w_{jk}A_{00,pq}-A_{jk,pq}) w_{pq} \, dt +\cdots , \quad j+k>0.
\end{equation}

Choosing for observation our special homodyne detection scheme from Section \ref{qugametoprsp}
(with $(2n+1)^2+2(2n+1)$ generalized Pauli operators) we
get the HJB-Isaacs equation \eqref{eqHJBIsforspecialhom} in the form

\[
0=\frac{\pa S}{\pa t} +(\al(W), \nabla S)+\De_{LB}S+\langle J\rangle_W
\]
\[
+\sum_{j,k,p,q}{Re} [i (w_{jk}A_{00,pq}-A_{jk,pq}) w_{pq}]\frac{\pa S}{\pa x_{jk}}
+\sum_{j,k,p,q}{Im} [i (w_{jk}A_{00,pq}-A_{jk,pq}) w_{pq}]\frac{\pa S}{\pa y_{jk}}
\]
\[
+\sup_u \left\{\sum_{j,k,p} \left[ {Re}[i w_{jk}h^I_{0p}(u) w_{p0}-ih^I_{jp}(u) w_{pk}]\frac{\pa S}{\pa x_{jk}}
+ {Im} [i w_{jk}h^I_{0p}(u) w_{p0}-ih^I_{jp}(u) w_{pk}] \frac{\pa S}{\pa y_{jk}}\right]\right\}
\]
 \begin{equation}
\label{eqHJBIsforspecialhomtwoat}
+\inf_v\left\{\sum_{j,k,p} \left[{Re} [iw_{jk} h^{II}_{p0}(v)w_{0p}-ih^{II}_{pk}(v)w_{jp}]\frac{\pa S}{\pa x_{jk}}
+ {Im} [iw_{jk} h^{II}_{p0}(v)w_{0p}-ih^{II}_{pk}(v)w_{jp}] \frac{\pa S}{\pa y_{jk}}\right]\right\},
\end{equation}
where $\al$ includes the contributions arising from the uncontrolled
Hamiltonian operators (if any)  and from the 1st order terms
of the Laplace-Beltrami operator $\De_{LB} $ on the projective space $P\C^{2n+1}$ (if any; there are no such terms for $n=1$).
We are fully in the setting of Section \ref{secqugameth} implying the well-posedness of the
backward Cauchy problem for the HJB-Isaacs equation \eqref{eqHJBIsforspecialhomtwoat} in classical and
 mild solutions that yield the minimax value of the corresponding zero-sum game.

\section{Non-zero-sum games}
\label{sectwoatomsnonzerosu}

In the previous section zero-sum games of two players were analyzed.
However, the initial  EWL and MW protocols are dealing with more general, non-zero-sum games.
These games can be also accommodated in our setting with continuous observations.
Let us consider for simplicity the case of two players playing on two coupled atoms.
$N$ players on $N$ atoms can be looked at analogously.

As in the previous section,  assume that players I and II can act on two atoms
with the combined Hilbert space  $\C^{n+1}\otimes \C^{n+1}$. To simplify the story, we shall assume
that the Hamiltonian operators $H_I(u)$ and $H_{II}(v)$ depend linearly on their control parameters
$u\in [-U,U]$ and $v\in [-V,V]$.
 Suppose player I (respectively II) can act on the 1st (resp. second) atom by
the controlled Hamiltonian operators $H_I(u)=(h^I_{jk}(u))$ (resp. $H_{II}(v)=(h^{II}_{jk}(v))$),
and the interaction between the atoms is given by an operator $A=(A_{jk,pq})$.
Hence the controlled filtering equation \eqref{eqqufiBlinntwoatompro} will be written as

 \[
dw_{jk} =i\sum_p [w_{jk}(uh^I_{0p} w_{p0} +vh^{II}_{p0}w_{0p})
-(u h^I_{jp} w_{pk} +vh^{II}_{pk}w_{jp})] \, dt
\]
\begin{equation}
\label{eqqufiBlinntwoatompronz}
+i\sum_{p,q} (w_{jk}A_{00,pq}-A_{jk,pq}) w_{pq} \, dt +\cdots , \quad j+k>0.
\end{equation}

Unlike the previous section with a single cost function we assume now that the players have different
cost functions, namely that players $I$ and $II$ aims at maximizing the costs
 \begin{equation}
\label{eqcostfunI}
P^I(t,W; u(.), v(.)) =\E \int_t^T    \langle J^I\rangle_{W(s)} \, ds +\langle F^I\rangle_{W(T)},
\end{equation}
 \begin{equation}
\label{eqcostfunII}
P^{II}(t,W; u(.), v(.)) =\E \int_t^T    \langle J^{II}\rangle_{W(s)} \, ds +\langle F^{II}\rangle_{W(T)},
\end{equation}
 respectively. We again assume for simplicity that the current costs $J^{I,II}$ do not depend on control,
  though this is really not essential.

 If player $I$ acts according to some strategy $u=u(t,W)$,  the optimal payoff of player $II$ can be defined from
 the backward Cauchy problem for the HJB equation
 \[
0=\frac{\pa S^{II}}{\pa t} +(\al(W), \nabla S^{II})+\De_{LB}S^{II}+\langle J^{II}\rangle_W
\]
\[
+\sum_{j,k,p,q}{Re} [i (w_{jk}A_{00,pq}-A_{jk,pq}) w_{pq}]\frac{\pa S^{II}}{\pa x_{jk}}
+\sum_{j,k,p,q}{Im} [i (w_{jk}A_{00,pq}-A_{jk,pq}) w_{pq}]\frac{\pa S^{II}}{\pa y_{jk}}
\]
\[
+ \left\{u \sum_{j,k,p}  \left[ {Re}[i w_{jk}h^I_{0p} w_{p0}-ih^I_{jp} w_{pk}]\frac{\pa S^{II}}{\pa x_{jk}}
+ {Im} [i w_{jk}h^I_{0p} w_{p0}-ih^I_{jp} w_{pk}] \frac{\pa S^{II}}{\pa y_{jk}}\right]\right\}
\]
 \begin{equation}
\label{eqHJBnonzero1}
+\sup_v\left\{v \sum_{j,k,p} \left[ {Re} [iw_{jk} h^{II}_{p0}w_{0p}-ih^{II}_{pk}w_{jp}]\frac{\pa S^{II}}{\pa x_{jk}}
+ {Im} [iw_{jk} h^{II}_{p0}w_{0p}-ih^{II}_{pk}w_{jp}] \frac{\pa S^{II}}{\pa y_{jk}}\right]\right\},
\end{equation}
where $\al$ includes the contributions arising from the uncontrolled
Hamiltonian operators (if any)  and from the 1st order terms
of the Laplace-Beltrami operator $\De_{LB} $ on the projective space $P\C^{2n+1}$.
Similarly for player I. Since $\sup_v$ and $\sup_u$ depend only on the signs ($\sgn$)  of the corresponding sums,
the pair of costs functions $S^I$ and $S^{II}$ satisfy the coupled system of two equations
(a vector-valued HJB):
 \[
0=\frac{\pa S^{I,II}}{\pa t} +(\al(W), \nabla S^{I,II})+\De_{LB}S^{I,II}+\langle J^{I,II}\rangle_W
\]
\[
+\sum_{j,k,p,q}{Re} [i (w_{jk}A_{00,pq}-A_{jk,pq}) w_{pq}]\frac{\pa S^{I,II}}{\pa x_{jk}}
+\sum_{j,k,p,q}{Im} [i (w_{jk}A_{00,pq}-A_{jk,pq}) w_{pq}]\frac{\pa S^{I,II}}{\pa y_{jk}}
\]
\[
+ \left\{u \sum_{j,k,p}  \left[ {Re}[i w_{jk}h^I_{0p} w_{p0}-ih^I_{jp} w_{pk}]\frac{\pa S^{I,II}}{\pa x_{jk}}
+ {Im} [i w_{jk}h^I_{0p} w_{p0}-ih^I_{jp} w_{pk}] \frac{\pa S^{I,II}}{\pa y_{jk}}\right]\right\}
\]
 \begin{equation}
\label{eqHJBnonzero2}
+\left\{v \sum_{j,k,p} \left[ {Re} [iw_{jk} h^{II}_{p0}w_{0p}-ih^{II}_{pk}w_{jp}]\frac{\pa S^{I,II}}{\pa x_{jk}}
+ {Im} [iw_{jk} h^{II}_{p0}w_{0p}-ih^{II}_{pk}w_{jp}] \frac{\pa S^{II}}{\pa y_{jk}}\right]\right\},
\end{equation}
with
 \begin{equation}
\label{eqHJBnonzero3}
\begin{aligned}
& u=U {\sgn} \left\{ \sum_{j,k,p}  \left[ {Re}[i w_{jk}h^I_{0p} w_{p0}-ih^I_{jp} w_{pk}]\frac{\pa S^{I,II}}{\pa x_{jk}}
+ {Im} [i w_{jk}h^I_{0p} w_{p0}-ih^I_{jp} w_{pk}] \frac{\pa S^{I,II}}{\pa y_{jk}}\right]\right\} \\
& v=V {\sgn} \left\{\sum_{j,k,p} \left[ {Re} [iw_{jk} h^{II}_{p0}w_{0p}-ih^{II}_{pk}w_{jp}]\frac{\pa S^{I,II}}{\pa x_{jk}}
+ {Im} [iw_{jk} h^{II}_{p0}w_{0p}-ih^{II}_{pk}w_{jp}] \frac{\pa S^{II}}{\pa y_{jk}}\right]\right\}.
\end{aligned}
\end{equation}

Since $u,v$ depend Lipschitz continuously on the gradients of $S^{I,II}$ and are uniformly bounded,
Theorem \ref{thHJBsmoothwel}
applies (more exactly, its straightforward vector-valued extension) leading to the well-posedness of system \eqref{eqHJBnonzero2}-\eqref{eqHJBnonzero3} in the sense of mild and/or classical solutions.
By the verification theorem (see e.g. \cite{FlemSon}; note that for checking the Nash condition one has to verify the optimality
for each single player, that is the verification theorem of the  standard control theory is applicable)
the solution of the backward Cauchy problem for system
\eqref{eqHJBnonzero2}-\eqref{eqHJBnonzero3}
yields the subgame-perfect Nash equilibrium for the corresponding game.
For the recent results on general non-zero-sum differential games we refer to \cite{GenerStochDiffGam219}
and references therein.

\section{Conclusions}
\label{conc}

We introduced the special homodyne detection schemes
that turn the problems of dynamic quantum filtering, control and games into the problems of the drift control
of the standard Brownian motions on the complex projective spaces, tori and Euclidean spaces
allowing for the effective theory of quantum dynamic games based on the classical and mild solutions
of the HJB-Isaacs equations on Riemannian manifolds. An explicitly solved example is presented.

This approach opens the road to the effective application of the recent advanced numeric approaches
to solving HJB equations, see \cite{MacPhys}, \cite{McEa09} and references therein, as well as to
the methods of finding explicit solutions from \cite{DuncTyr18}. Of course, additional work
is required for the concrete applications
of these methods to the present setting.

An interesting question arises from our construction.
What is the minimal number $N$ of the operators $L_j$ (physically, of optical measuring devices)
for a quantum system in $\C^{n+1}$
that can ensure that the resulting diffusion on $P\C^n$ is everywhere nondegenerate (and hence the theory of
Section \ref{secgameon Riemann} applies)? From Section \ref{qugametoprsp} it follows that $2n\le N \le n^2+2n$.
In particular $2\le N\le 3$ for a qubit. Notice that the scheme of Section \ref{secqugametoEuc} does not solve the problem
(as may be thought superficially), as it constructs the scheme with $N=2n$, which is nondegenerate everywhere
on the chart specified by finite $W$, but not outside it (in particular, with the exception of one point for a qubit).

Additionally, one can look at unbounded coefficients control problems arising from the homodyne schemes of Section
\ref{secqugametoEuc}.

Of interest is also a proper investigation of the long time behavior of controlled quantum processes,
which can lead to some kind of turnpike behavior (see \cite{KoWe12}) of stationary solutions. Some steps in this direction
were made in \cite{Kol92} and \cite{Kol95}, \cite{Barchonm} for the jump-type and diffusive filtering respectively. Some
application of these ideas can be found in \cite{Yurev}.

\end{document}